\documentclass{mathincs}
\usepackage{graphicx}
\usepackage[T1]{fontenc}
\usepackage{amsfonts}
\usepackage{amsmath,amssymb,mathtools,bbm,url}
\usepackage{todonotes}
\setlength{\marginparwidth}{2.3cm} 
\usepackage[normalem]{ulem}

\usepackage[shortlabels]{enumitem}
\usepackage{url}
\usepackage{algorithm}
\usepackage{algorithmic}

\newtheorem{theorem}{Theorem}
\newtheorem{proposition}[theorem]{Proposition}
\theoremstyle{definition}
\newtheorem{definition}[theorem]{Definition}
\theoremstyle{remark}
\newtheorem{remark}[theorem]{Remark}
\newtheorem{example}[theorem]{Example}

\newcommand{\NN}{\mathbbm{N}}

\newcommand{\RR}{\mathbbm{R}}
\newcommand{\ZZ}{\mathbbm{Z}}

\newcommand{\QQQ}{\mathcal{Q}}
\newcommand{\RRR}{\mathcal{R}}
\newcommand{\SSS}{\mathcal{S}}

\newcommand{\Eta}{\mathrm{H}}
\newcommand{\perm}[1]{\pi(#1)}

\newcommand{\vv}{\mathbf{v}}
\newcommand{\xx}{\mathbf{x}}
\newcommand{\yy}{\mathbf{y}}

\newcommand{\false}{\operatorname{false}}
\newcommand{\true}{\operatorname{true}}

\newcommand{\dotneq}{\mathrel{\dot\neq}}
\newcommand{\dotequal}{\mathrel{\dot=}}

\begin{document}

\title[Real Singularities of Implicit ODEs]{A Logic Based Approach to
  Finding Real Singularities of Implicit Ordinary Differential Equations}
\author{Werner M. Seiler}
\address{Institut f\"ur Mathematik, Universit\"at Kassel, 34109 Kassel,
  Germany}
\email{seiler@mathematik.uni-kassel.de}
\author{Matthias Sei\ss}
\address{Institut f\"ur Mathematik, Universit\"at Kassel, 34109 Kassel,
  Germany}
\email{seiss@mathematik.uni-kassel.de}
\author{Thomas Sturm}
\address{CNRS, Inria, and the University of Lorraine, Nancy, France\\
  MPI Informatics and Saarland University, Saarbr\"ucken, Germany}
\email{thomas.sturm@loria.fr, sturm@mpi-inf.mpg.de}

\begin{abstract}
  We discuss the effective computation of geometric singularities of
  implicit ordinary differential equations over the real numbers using
  methods from logic.  Via the Vessiot theory of differential equations,
  geometric singularities can be characterised as points where the
  behaviour of a certain linear system of equations changes.  These points
  can be discovered using a specifically adapted parametric generalisation
  of Gaussian elimination combined with heuristic simplification techniques
  and real quantifier elimination methods.  We demonstrate the relevance
  and applicability of our approach with computational experiments using a
  prototypical implementation in \textsc{Reduce}.
\end{abstract}
\keywords{Implicit differential equations, geometric singularities, Vessiot
  distribution, real algebraic computations, logic computation}
\subjclass{Primary 34A09; Secondary 34-04, 34A26, 34C08, 34C40, 37C10, 68W30}

\maketitle

\section{Introduction}

Implicit differential equations, i.\,e.\ equations which are not solved for
a derivative of highest order, appear in many applications. In particular,
the so-called differential algebraic equations (DAE) may be considered as a
special case of implicit equations.\footnote{Differential algebraic
  equations owe their name to the fact that in a solved form they often
  comprise both differential equations and ``algebraic'' equations (meaning
  equations in which no derivatives appear). This should not be confused
  with (semi)algebraic differential equations, the main topic of this work,
  where the ``algebraic'' refers to the fact that only polynomial
  nonlinearities are permitted (see below).} Compared with equations in
solved form, implicit equations are more complicated to analyse and show a
much wider range of phenomena. Already basic questions about the existence
and uniqueness of solutions of an initial value problem become much more
involved.  One reason is the possible appearance of singularities. Note
that we study in this work singularities of the differential equations
themselves (defined below in a geometric sense) and not singularities of
individual solutions like poles.

Our approach to singularities of differential equations is conceptually
based on the theory of singularities of maps between smooth manifolds (as
e.\,g.\ described in \cite{agv:sing1,gg:stable}), i.\,e.\ of a differential
topological nature.  Within this theory, the main emphasis has
traditionally been on classifying possible types of singularities and on
corresponding normal forms, see e.\,g.\ \cite{ld:singgen,aad:normform}.
Nice introductions can be found in \cite{via:geoode} or \cite{aor:bising}.
By contrast, we are here concerned with the \emph{effective} detection of
all geometric singularities of a given implicit ordinary differential
equation.  This requires the additional use of techniques from differential
algebra \cite{ko:daag,ritt:da} and algebraic geometry \cite{clo:iva}.

In \cite{lrss:gsade}, the first two authors developed together with
collaborators a novel framework for the analysis of \emph{algebraic}
differential equations, i.\,e.\ differential equations (and inequations)
described by differential polynomials, which combines ideas and techniques
from differential algebra, differential geometry and algebraic
geometry.\footnote{For scalar ordinary differential equations of first
  order, a somewhat similar theory was developed by Hubert \cite{eh:degen}.
  The approach in \cite{lrss:gsade} covers much more general situations
  including systems of arbitrary order and partial differential equations.}
A key role in this new effective approach is played by the \emph{Thomas
  decomposition} which exists in an algebraic version for algebraic systems
and in a differential version for differential systems.  Both were first
introduced by Thomas \cite{th:ds,th:sr} and later rediscovered by Gerdt
\cite{vpg:decomp}; an implementation in \textsc{Maple} is described in
\cite{bglr:thomas} (see also \cite{glhr:tdds}).  Unfortunately, the
algorithms behind the Thomas decomposition require that the underlying
field is algebraically closed.  Hence, it is always assumed in
\cite{lrss:gsade} that a \emph{complex} differential equation is treated.
However, most differential equations appearing in applications are real.
The main goal of this work is to adapt the framework of \cite{lrss:gsade}
to \emph{real} ordinary differential equations.

The approach in \cite{lrss:gsade} consists of a differential and an
algebraic step.  For the prepatory differential step, one may continue to
use basic differential algebraic algorithms (for example the differential
Thomas decomposition).  A key task of the differential step is to exhibit
all integrability conditions which may be hidden in the given system and
for this the base field does not matter.  In this work, we are mainly
concerned with presenting an alternative for the algebraic step -- where
the actual identification of the singularities happens -- which is valid
over the real numbers.

Our use of real algebraic geometry combined with computational logic has
several benefits.  In a complex setting, one may only consider inequations.
Over the reals, also the treatment of inequalities like positivity
conditions is possible which is important for many applications e.\,g.\ in
biology and chemistry.  We will extend the approach from \cite{lrss:gsade}
by generalising the notion of an algebraic differential equation used in
\cite{lrss:gsade} to \emph{semi}algebraic differential equations, which
allow for arbitrary inequalities.  As a further improvement, we will make
stronger use of the fact that the detection of singularities represents
essentially a linear problem.  This will allow us to avoid some redundant
case distinctions that are unavoidable in the approach of
\cite{lrss:gsade}, as they must appear in any algebraic Thomas
decomposition, although they are irrelevant for the detection of
singularities.

The article is structured as follows.  The next section firstly exhibits
some basics of the geometric theory of (ordinary) differential equations.
We then recapitulate the key ideas behind the differential step of
\cite{lrss:gsade} and encapsulate the key features of the outcome in the
improvised notion of a ``well-prepared'' system.  Finally, we define the
geometric singularities that are studied here.  In the third section, we
develop a Gauss algorithm for linear systems depending on parameters with
certain extra features and rigorously prove its correctness.  The fourth
section represents the core of our work.  We show how finding geometric
singularities can essentially be reduced to the analysis of a parametric
linear system and present then an algorithm for the automatic detection of
all real geometric singularities based on our parametric Gauss algorithm.
The fifth section demonstrates the relevance of our algorithm by applying
it to some basic examples some of which stem from the above mentioned
classifications of all possible singularities of scalar first-order
equations.  Although these examples are fairly small, it becomes evident
how our logic based approach avoids some unnecessary case distinctions made
by the algebraic Thomas decomposition.

\section{Geometric Singularities of Implicit Ordinary Differential
  Equations}

We use the basic set-up of the geometric theory of differential equations
following \cite{wms:invol} to which we refer for more details. For a system
of ordinary differential equations of order $\ell$ in $m$ unknown
real-valued functions $u_{\alpha}(t)$ of the independent real variable $t$,
we construct over the trivial fibration
$\pi=\mathrm{pr}_{1}:\RR\times\RR^{m}\rightarrow\RR$ the \emph{$\ell$th
  order jet bundle $J_{\ell}\pi$}. For our purposes, it is sufficient to
imagine $J_{\ell}\pi$ as an affine space diffeomorphic to
$\RR^{(\ell+1)m+1}$ with coordinates
$(t,\mathbf{u},\mathbf{\dot{u}}\dots,\mathbf{u}^{(\ell)})$ corresponding to
the independent variable $t$, the $m$ dependent variables
$\mathbf{u}=(u_{1},\dots,u_{m})$ and the derivatives of the latter ones up
to order $\ell$. We denote by $\pi^{\ell}:J_{\ell}\pi\rightarrow\RR$ the
canonical projection on the first coordinate.  The contact structure is a
geometric way to encode the different roles played by the different
variables, i.\,e.\ that $t$ is the independent variable and that
$u_{\alpha}^{(i)}$ denotes the derivative of $u_{\alpha}^{(i-1)}$ with
respect to $t$.  We describe the contact structure by the \emph{contact
  distribution} $\mathcal{C}^{(\ell)}\subset TJ_{\ell}\pi$ which is spanned
by one $\pi^{\ell}$-transversal and $m$ $\pi^{\ell}$-vertical vector
fields:\footnote{A vector field $X$ is $\pi^{\ell}$-vertical, if at every
  point $\rho\in J_{\ell}\pi$ we have
  $X_{\rho}\in\ker{T_{\rho}\pi^{\ell}}$; otherwise it is
  $\pi^{\ell}$-transversal.}
\begin{displaymath}
  C^{(\ell)}_{\mathrm{trans}}=\partial_{t} +
      \sum_{i=1}^{\ell}\sum_{\alpha=1}^{m}
      u_{\alpha}^{(i)}\cdot\partial_{u_{\alpha}^{(i-1)}}\,,\qquad
  C^{(\ell)}_{\alpha}=\partial_{u_{\alpha}^{(\ell)}}\quad (\alpha=1,\dots,m)\,.
\end{displaymath}
The transversal field essentially corresponds to a geometric version of the
chain rule and the vertical fields are needed because we must cut off the
chain rule at a finite order, since in $J_{\ell}\pi$ no variables exist
corresponding to derivatives of order $\ell+1$ required for the next terms
in the chain rule.

We can now rigorously define the class of differential equations that will
be studied in this work.  Note that in the geometric theory one does not
distinguish between a scalar equation and a system of equations, as a
differential equation is considered as a single geometric object
independent of its codimension.  In \cite{lrss:gsade}, an algebraic jet set
of order $\ell$ is defined as a locally Zariski closed subset of
$J_{\ell}\pi$, i.\,e.\ as the set theoretic difference of two varieties.
This approach reflects the fact that over the complex numbers only
equations and inequations are allowed.  Over the real numbers, one would
like to include arbitrary inequalities like for example positivity
conditions.  Thus it is natural to proceed from algebraic to semialgebraic
geometry.  Recall that a semialgebraic subset of $\RR^{n}$ is the solution
set of a Boolean combination of conditions of the form $f=0$ or
$f\diamond0$ where $f$ is a polynomial in $n$ variables and $\diamond$
stands for some relation in $\{<,>,\leq,\geq,\neq\}$ (see e.\,g.\
\cite[Chap.~2]{bcr:realag}).

\begin{definition}\label{def:semi}
  A \emph{semialgebraic jet set} of order $\ell$ is a semialgebraic subset
  $\mathcal{J}_{\ell}\subseteq J_{\ell}\pi$ of the $\ell$th order jet
  bundle.  Such a set is a \emph{semialgebraic differential equation}, if
  in addition the Euclidean closure of $\pi^{\ell}(\mathcal{J}_{\ell})$ is
  the whole base space $\RR$.
\end{definition}

In the traditional geometric theory, a differential equation is a fibred
submanifold of $J_{\ell}\pi$ such that the restriction of $\pi^{\ell}$ to
it defines a surjective submersion. The latter condition excludes any kind
of singularities and is thus dropped in our approach.  We replace the
submanifold by a semialgebraic and thus in particular constructible set,
i.\,e.\ a finite union of locally Zariski closed sets.  This is on the one
hand more restrictive, as only polynomial equations and inequalities are
allowed.  On the other hand, it is more general, as a semialgebraic set may
have singularities in the sense of algebraic geometry.  We will call such
points \emph{algebraic singularities} of the semialgebraic differential
equation $\mathcal{J}_{\ell}$ to distinguish them from the geometric
singularities on which we focus in this work.

The additional closure condition imposed in Definition \ref{def:semi} for a
semialgebraic differential equation ensures that the semialgebraic
differential system defining it does not contain equations depending solely
on $t$ and thus that $t$ represents indeed an independent variable.
Nevertheless, we admit that certain values of $t$ are not contained in the
image $\pi^{\ell}(\mathcal{J}_{\ell})$.  This relaxation compared with the
standard geometric theory allows us to handle equations like $t\dot{u}=1$
where the point $t=0$ is not contained in the projection.  We use the
Euclidean closure instead of the Zariski one, as for a closer analysis of
the solution behaviour around such a point (which we will not do in this
work) it is of interest to consider the point as the limit of a sequence of
points in $\pi^{\ell}(\mathcal{J}_{\ell})$.

A (sufficiently often differentiable) function
$\mathbf{g}:\mathcal{I}\subseteq\RR\rightarrow\RR^{m}$ defined on some
interval $\mathcal{I}$ is a \emph{(local) solution} of the semialgebraic
differential equation $\mathcal{J}_{\ell}\subset J_{\ell}\pi$, if its
prolonged graph, i.\,e.\ the image of the curve
$\gamma_{\mathbf{g}}:\mathcal{I}\rightarrow J_{\ell}\pi$ given by
$t\mapsto \bigl(t,\mathbf{g}(t), \mathbf{\dot{g}}(t), \dots,
\mathbf{g}^{(\ell)}(t)\bigr)$ lies completely in the set
$\mathcal{J}_{\ell}$.  This definition of a solution represents simply a
geometric version of the usual one.  Figure~\ref{fig:ode} shows the
semialgebraic differential equation $\mathcal{J}_{1}$ which is defined by
the scalar first-order equation $\dot{u}-tu^{2}=0$ together with some of
its prolonged solutions.  $\mathcal{J}_{1}$ is a classical example of a
differential equation with so-called movable singularities: its solutions
are given by $u(t)=2/(c-t^{2})$ with an arbitrary constant $c\in\RR$ and
each solution with a positive $c$ becomes singular after a finite time.
However, this differential equation does \emph{not} exhibit the kind of
singularities that we will be studying in this work.  We are concerned with
singularities of the differential equation itself and not with
singularities of individual solutions.

\begin{figure}
  \centering
  \includegraphics[width=0.5\textwidth]{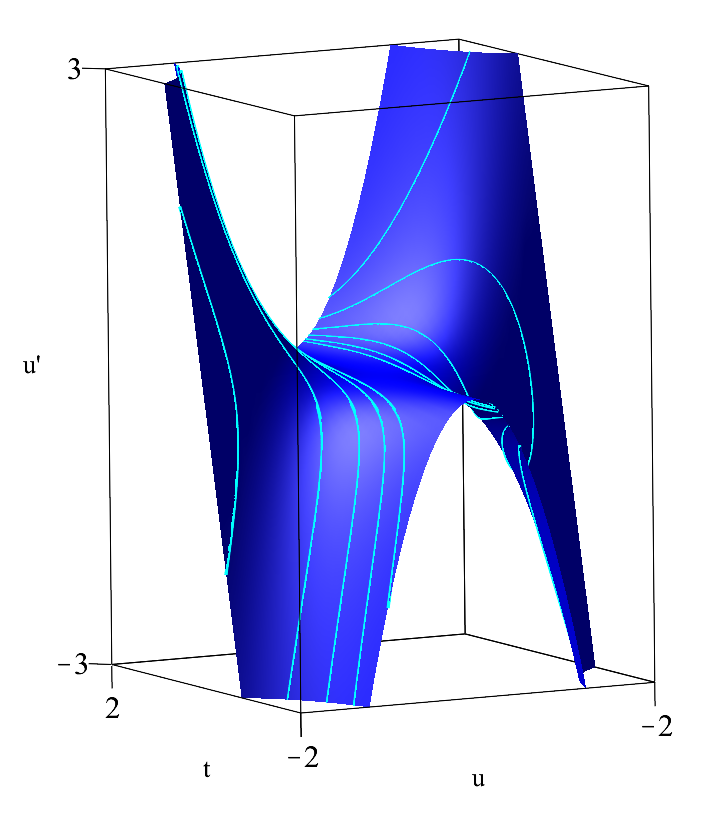}
  \caption{A semialgebraic differential equation with some prolonged solutions}
  \label{fig:ode}
\end{figure}
  
We call a semialgebraic jet set $\mathcal{J}_{\ell}\subseteq J_{\ell}\pi$
\emph{basic}, if it can be described by a finite set of equations $p_{i}=0$
and a finite set of inequalities $q_{j}>0$ where $p_{i}$ and $q_{j}$ are
polynomials in the coordinates
$(t,\mathbf{u},\mathbf{\dot{u}}\dots,\mathbf{u}^{(\ell)})$.  We call such a
pair of sets a \emph{basic semialgebraic system} on $J_{\ell}\pi$.  It
follows from an elementary result in real algebraic geometry
\cite[Prop.~2.1.8]{bcr:realag} that any semialgebraic jet set can be
expressed as a union of finitely many basic semialgebraic jet sets.  We
will always assume that our sets are given in this form and study each
basic semialgebraic system separately, as for some steps in our analysis it
is crucial that at least the equation part of the system is a pure
conjunction.

To obtain correct and meaningful results with our approach, we need some
further assumptions on the basic semialgebraic differential systems we are
studying.  More precisely, the systems have to be carefully prepared using
a procedure essentially corresponding to the differential step of the
approach developed in \cite{lrss:gsade} and the subsequent transformation
from a differential algebraic formulation to a geometric one. Otherwise,
hidden integrability conditions or other subtle problems may lead to false
results.  We present here only a very brief description of this procedure
and refer for all details and an extensive discussion of the underlying
problems to \cite{lrss:gsade}.  We use in the sequel some basic notions
from differential algebra \cite{ko:daag,ritt:da} and the Janet--Riquier
theory of differential equations \cite{ja:lec,riq:edp} which can be found
in modern form for example in \cite{dr:habil} to which we refer for
definitions of all unexplained terminology and for background information.

The starting point of our analysis will always be a basic semialgebraic
system with equations $p_{i}=0$ $(1\leq i\leq r)$ and inequalities
$q_{j}>0$ $(1\leq j\leq s)$.  We call such a system \emph{differentially
  simple} with respect to some orderly ranking $\prec$, if it satisfies the
following three conditions:
\begin{enumerate}[(i)]
\item all polynomials $p_{i}$ and $q_{j}$ are non-constant and have
  pairwise different leaders,
\item no leader of an inequality $q_{j}$ is a derivative of the leader of
  an equation $p_{i}$,
\item away from the variety defined by the vanishing of all the initials
  and all the separants of the polynomials $p_{i}$, the equations
  define a passive differential system for the Janet division.
\end{enumerate}
The last condition ensures the absence of hidden integrability conditions
and thus the existence of formal solutions (i.\,e.\ solutions in the form
of power series without regarding their convergence) for almost all initial
conditions.  In the sequel, we will always assume that in addition our
system is not underdetermined, i.\,e.\ that its formal solution space is
finite-dimensional.  Differentially simple systems can be obtained with the
differential Thomas decomposition.

Consider the ring of differential polynomials
$\mathcal{D}=\RR(t)\{\mathbf{u}\}$.  Obviously, the polynomials $p_{i}$ may
be considered as elements of $\mathcal{D}$ and we denote by
$\hat{\mathcal{I}}=\langle p_{1},\dots,p_{r}\rangle_{\mathcal{D}}$ the
differential ideal generated by the equations in our differentially simple
system.  It turns out that in some respect this ideal is too small and
therefore we saturate it with respect to the differential polynomial
$Q=\prod_{i=1}^{r}\mathrm{init}(p_{i})\mathrm{sep}(p_{i})$ to obtain the
differential ideal $\mathcal{I}=\hat{\mathcal{I}}:Q^{\infty}$ of which one
can show that it is the radical of $\hat{\mathcal{I}}$
\cite[Prop.~2.2.72]{dr:habil}.  Over the real numbers, we need the
potentially larger real radical according to the real nullstellensatz (see
e.\,g.\ \cite[Sect.~4.1]{bcr:realag} for a discussion).  An algorithm for
determining the real radical was proposed by Becker and Neuhaus
\cite{bn:realrad,rn:realrad2}.  An implementation over the rational numbers
exists in \textsc{Singular} \cite{ss:realrad}.  However, in all these
references it is assumed that one deals with an ideal in a polynomial ring
with finitely many variables.  Thus we have to postpone the determination
of the real radical until we have obtained such an ideal.

For the transition from differential algebra to jet geometry, we introduce
for any finite order $\ell\in\NN$ the finite-dimensional subrings
$\mathcal{D}_{\ell}=
\mathcal{D}\cap\RR[t,\mathbf{u},\dots,\mathbf{u}^{(\ell)}]$.  Note that
$\mathcal{D}_{\ell}$ is the coordinate ring of the jet bundle $J_{\ell}\pi$
considered as an affine space.  Fixing some order $\ell\in\NN$ which is at
least the maximal order of an equation~$p_{i}=0$ or an inequality
$q_{j}>0$, we define the polynomial ideal
$\hat{\mathcal{I}}_{\ell}=\hat{\mathcal{I}}\cap\mathcal{D}_{\ell}$.  Using
Janet--Riquier theory and Gr\"obner basis techniques, it is straightforward
to construct an explicit generating set of this ideal.  Now that we have an
ideal in a polynomial ring with finitely many variables, we can determine
its real radical $\mathcal{I}_{\ell}$.  Finally, we prefer to work with
irreducible sets and thus perform a real prime decomposition of the ideal
$\mathcal{I}_{\ell}$ and study each prime component
separately.\footnote{Over the complex numbers, one can show that the
  radical $\mathcal{I}_{\ell}$ obtained after the saturation with $Q$ is
  always equidimensional \cite[Thm.~1.94]{lh:phd} and therefore does not
  possess embedded primes.  It is unclear whether the real radical shares
  this property.  For our geometric analysis, it suffices to study only the
  minimal primes.} Thus we may assume in the sequel without loss of
generality that the given polynomials~$p_{i}$ generate directly a real
prime ideal $\mathcal{I}_{\ell}\subset\mathcal{D}_{\ell}$.

\begin{definition}\label{def:wellprep}
  A basic semialgebraic differential equation
  $\mathcal{J}_{\ell}\subset J_{\ell}\pi$ is called \emph{well prepared},
  if it is obtained by the above outlined procedure starting from a
  differentially simple system.
\end{definition}

Consider a (local) solution
$\mathbf{g}:\mathcal{I}\subseteq\RR\rightarrow\RR^{m}$ of a semialgebraic
differential equation $\mathcal{J}_{\ell}$ and the corresponding curve
$\gamma_{\mathbf{g}}:\mathcal{I}\rightarrow J_{\ell}\pi$ given by
$t\mapsto \bigl(t,\mathbf{g}(t), \mathbf{\dot{g}}(t), \dots,
\mathbf{g}^{(\ell)}(t)\bigr)$.  Since, according to our definition of a
solution, $\mathrm{im}\,\gamma_{\mathbf{g}}\subseteq\mathcal{J}_{\ell}$,
for each $t\in\mathcal{I}$ the tangent vector $\gamma_{\mathbf{g}}'(t)$
must lie in the tangent space
$T_{\gamma_{\mathbf{g}}(t)}\mathcal{J}_{\ell}$ of $\mathcal{J}_{\ell}$ at
the point $\gamma_{\mathbf{g}}(t)\in\mathcal{J}_{\ell}$.  We mentioned
already above the contact structure of the jet bundle.  It characterises
intrinsically those (transversal) curves
$\gamma:\mathcal{I}\subseteq\RR\rightarrow J_{\ell}\pi$ that are prolonged
graphs.  More precisely, there exists a function
$\mathbf{g}:\mathcal{I}\rightarrow\RR^{m}$ such that
$\gamma=\gamma_{\mathbf{g}}$, if and only if the tangent vector
$\gamma'(t)$ is contained in the contact distribution
$\mathcal{C}^{(\ell)}|_{\gamma(t)}$ evaluated at $\gamma(t)$.  These two
observations motivate the following definition of the space of all
``infinitesimal solutions'' of the differential equation
$\mathcal{J}_{\ell}$.

\begin{definition}\label{def:vess}
  Given a point $\rho$ on a semialgebraic jet set
  $\mathcal{J}_{\ell}\subseteq J_{\ell}\pi$, we define the \emph{Vessiot
    space} at $\rho$ as the linear space
  $\mathcal{V}_{\rho}[\mathcal{J}_{\ell}]=
  T_{\rho}\mathcal{J}_{\ell}\cap\mathcal{C}^{(\ell)}|_{\rho}$.
\end{definition}

In general, the properties of the Vessiot spaces
$\mathcal{V}_{\rho}[\mathcal{J}_{\ell}]$ depend on their base point $\rho$.
In particular, at different points the Vessiot spaces may have different
dimensions.  Nevertheless, it is easy to show that for a well-prepared
semialgebraic differential equation $\mathcal{J}_{\ell}$ the Vessiot spaces
define a smooth regular distribution on a Zariski open and dense subset of
$\mathcal{J}_{\ell}$ (see e.\,g.\ \cite[Prop.~2.10]{lrss:gsade} for a
rigorous proof).  Therefore, with only a minor abuse of language, we will
call the family of all Vessiot spaces the \emph{Vessiot distribution}
$\mathcal{V}[\mathcal{J}_{\ell}]$ of the given differential equation
$\mathcal{J}_{\ell}$.

We will ignore here \emph{algebraic singularities} of a semialgebraic
differential equation $\mathcal{J}_{\ell}$, i.\,e.\ points on
$\mathcal{J}_{\ell}$ that are singularities in the sense of algebraic
geometry.  It is a classical task in algebraic geometry to find them,
e.\,g.\ with the Jacobian criterion which reduces the problem to linear
algebra \cite[Thm.~9.6.9]{clo:iva}.  We will focus instead on
\emph{geometric singularities}.  In the here exclusively considered case of
not underdetermined ordinary differential equations, we can use the
following -- compared with \cite{lrss:gsade} simplified -- definition which
is equivalent to the classical definition given e.\,g.\ in
\cite{via:geoode}.

\begin{definition}\label{def:sing}
  Let $\mathcal{J}_{\ell}\subseteq J_{\ell}\pi$ be a well-prepared, not
  underdetermined, semialgebraic jet set.  A smooth point
  $\rho\in\mathcal{J}_{\ell}$ with Vessiot space
  $\mathcal{V}_{\rho}[\mathcal{J}_{\ell}]$ is called
  \begin{enumerate}[(i)]
  \item \emph{regular}, if $\dim{\mathcal{V}_{\rho}[\mathcal{J}_{\ell}]}=1$
    and
    $\mathcal{V}_{\rho}[\mathcal{J}_{\ell}]\cap\ker{T_{\rho}}\pi^{\ell}=0$,
  \item \emph{regular singular}, if
    $\dim{\mathcal{V}_{\rho}[\mathcal{J}_{\ell}]}=1$ and
    $\mathcal{V}_{\rho}[\mathcal{J}_{\ell}]\subseteq\ker{T_{\rho}}\pi^{\ell}$,
  \item \emph{irregular singular}, if
    $\dim{\mathcal{V}_{\rho}[\mathcal{J}_{\ell}]}>1$.
  \end{enumerate}
\end{definition}

Thus irregular singularities are characterised by a jump in the dimension
of the Vessiot space.  At a regular singularity, the Vessiot space
$\mathcal{V}_{\rho}[\mathcal{J}_{\ell}]$ has the ``right'' dimension,
i.\,e.\ the same as at a regular point, but in the ambient tangent space
$T_{\rho}J_{\ell}\pi$ its position relative to the subspace
$\ker{T_{\rho}}\pi^{\ell}$ is ``wrong'': it lies vertical, i.\,e.\ it is
contained in $\ker{T_{\rho}}\pi^{\ell}$.  By contrast, at regular points
the Vessiot space is $\pi^{\ell}$-transversal, since
$\mathcal{V}_{\rho}[\mathcal{J}_{\ell}]\cap\ker{T_{\rho}}\pi^{\ell}=0$.
The relevance of this distinction is that any tangent vector to the
prolonged graph of a function is always $\pi^{\ell}$-transversal.  Hence no
prolonged solution can go through a regular singularity.

A sufficiently small (Euclidean) neighbourhood of an arbitrary regular
point can be foliated by the prolonged graphs of solutions.  At a regular
singular point, there still exists a foliation of any sufficiently small
neighbourhood by integral curves of the Vessiot distribution.  However, at
such a point these curves can no longer be interpreted as prolonged graphs
of functions (see \cite{wms:aims} or \cite{wms:quasilin} for a more
detailed discussion).  The set of all regular and all regular singular
points is the above mentioned Zariski open and dense subset of
$\mathcal{J}_{\ell}$ on which the Vessiot spaces define a smooth regular
distribution.  At the irregular singular points, the classical uniqueness
results fail and it is possible that several (even infinitely many)
prolonged solutions are passing through such a point.

\section{Parametric Gaussian Elimination}

We will show in the next section that an algorithmic realisation of
Definition \ref{def:sing} essentially boils down to analysing a parametric
linear system of equations. Therefore we study now \emph{parametric
  Gaussian elimination} in some detail and propose a corresponding
algorithm that satisfies a number of particular requirements coming with
our application to differential equations.  While parametric Gaussian
elimination has beed studied in theory and practice for more than 30 years,
e.\,g.~\cite{BallarinKauers:04a,Grigoriev:88a,Sit:92a}, it is still not
widely available in contemporary computer algebra systems. One reason might
be that it calls for logic and decision procedures for an efficient
heuristic processing of the potentially exponential number of cases to be
considered. The algorithm proposed here is based on experiences with the
\textsc{PGauss} package which was developed in \textsc{Reduce}
\cite{Hearn:67a,Hearn:05a} as an unpublished student's project under
co-supervision of the third author in 1998. The original motivation at that
time was the investigation of possible integration and implicit use of the
\textsc{Reduce} package \textsc{Redlog} for interpreted first-order logic
\cite{DolzmannSturm:97a,Sturm:06a,Sturm:07a} in core domains of computer
algebra (see also \cite{DolzmannSturm:97b}).

For our proof-of-concept purposes here, we keep the algorithm quite basic
from a linear algebra point of view.  For instance, we do not perform
Bareiss division \cite{Bareiss:68a}, which is crucial for polynomial
complexity bounds in the non-parametric case.  On the other hand, we apply
strong heuristic simplification techniques \cite{DolzmannSturm:97c} and
quantifier elimination-based decision procedures
\cite{Kosta:16a,Seidl:06a,Weispfenning:88a,Weispfenning:97b} from
\textsc{Redlog} for pruning at an early stage the potentially exponential
number of cases to be considered.

In a rigorous mathematical language, we consider the following problem over
a field $K$ of characteristic $0$.  We are given an $M\times N$ matrix $A$
with entries from a polynomial ring $\ZZ[\vv]$ whose $P$ variables
$\vv=(v_{1},\dots,v_{P})$ are considered as parameters.  In dependence of
the parameters $\vv$, we are interested in determining the solution space
$S\subseteq K^{N}$ of the homogeneous linear system $A\xx=0$ in the
unknowns $\xx=(x_{1},\dots,x_{N})$.  Furthermore, we assume that we are
given a sublist $\yy\subseteq\xx$ of unknowns defining the linear subspace
$\Pi_\yy(K^N) := \{\,\xx \in K^N \mid \text{$x_i = 0$ for
  $x_i \in \yy$}\,\} \subseteq K^{N}$ and we also want to determine the
dimension of the intersection $S \cap \Pi_\yy(K^N)$.  A parametric Gaussian
elimination is for us then a procedure that produces from these data a list
of pairs $(\Gamma_{i}, \Eta_{i})_{i=1,\dots,I}$.  Each \emph{guard}
$\Gamma_{i}$ describes a semialgebraic subset
$G(\Gamma_{i}) = \{\,\bar\vv \in K^P \mid K, (\vv=\bar\vv) \models
\Gamma_{i}\,\}$ of the parameter space $K^{P}$.  The respective
\emph{parametric solution} $\Eta_{i}$ represents the solution space
$S(\Eta_{i})$ of $A\xx = 0$ for all parameter values
$\bar\vv \in G(\Gamma_{i})$ in the following sense.

\begin{definition}
  Let $A \in \ZZ[\vv]^{M \times N}$, and let $\bar\vv \in K^P$ be some
  parameter values.  A \emph{parametric solution of $A\xx = 0$ suitable for
    $\bar\vv$} is a set of formal equations
  \begin{displaymath}
    \Eta=\{
    x_{\perm{1}} = s_{1},\,
    \dots,\,
    x_{\perm{L}} = s_{L},\,
    x_{\perm{L+1}} = r_{N-L},\,
    \dots,\,
    x_{\perm{N}} = r_{1}
    \},
  \end{displaymath}
  where $L \in \{1, \dots, N\}$, $\pi$ is a permutation of
  $\{1, \dots, N\}$, we have
  $s_{n} \in \ZZ(\vv,x_{\perm{n+1}}, \dots, x_{\perm{N}})$ for
  ${n \in \{1, \dots, L\}}$, and $r_1$, \dots, $r_{N-L}$ are new
  indeterminates.  We call $x_{\perm{L+1}}$, \dots,~$x_{\perm{N}}$
  \emph{independent variables}.\footnote{The introduction of the new
    indeterminates $r_1$, \dots,~$r_{N-L}$ is somewhat redundant.  Our
    motivation is to mimic the output of \textsc{Reduce}, which uses at
    their place operators \texttt{arbreal(n)} or \texttt{arbcomplex(n)},
    respectively.} If one substitutes $\vv = \bar\vv$, then the following
  holds.  The denominator of any rational function $s_n$ does not vanish.
  For an arbitrary choice of values $\bar r_1$,
  \dots,~$\bar r_{N-L} \in K$, one obtains values $\bar s_1$,
  \dots,~$\bar s_L \in K$ such that
  \begin{displaymath}
    \bar x_{\perm{1}} = \bar s_{1},\quad
    \dots,\quad
    \bar x_{\perm{L}} = \bar s_{L},\quad
    \bar x_{\perm{L+1}} = \bar r_{N-L},\quad
    \dots,\quad
    \bar x_{\perm{N}} = r_{1}
  \end{displaymath}
  defines a solution $\bar\xx\in K^{N}$ of $A\xx = 0$.  Vice versa, every
  solution $\bar\xx\in K^{N}$ of $A\xx = 0$ can be obtained this way for
  some choice of values $\bar r_1$, \dots,~$\bar r_{N-L} \in K$.
\end{definition}

In addition, we require that
$\dim{\bigl(S(\Eta_{i}) \cap \Pi_\yy(K^N)\bigr)}$ is constant on the set
$G(\Gamma_{i})$ and that $G(\Gamma_{i}) \cap G(\Gamma_{j}) = \emptyset$ for
$i \neq j$ and $\bigcup_{i=1}^{I}G(\Gamma_{i}) = K^{P}$, i.\,e.\ that the
guards provide a disjoint partitioning of the parameter space.

Our Gauss algorithm will use a logical deduction procedure $\vdash_K$ to
derive from conditions $\Gamma$ whether or not certain matrix entries
vanish in $K$. The correctness of our algorithm will require only two very
natural assumptions on $\vdash_K$:
\begin{enumerate}[$D_1$.]
\item $\Gamma \vdash_K \gamma$ implies
  $K, \Gamma \models \gamma$, i.e., $\vdash_K$ is sound;
\item $\gamma \land \Gamma \vdash_K \gamma$, i.e., $\vdash_K$ can derive
  constraints that literally occur in the premise.
\end{enumerate}
Of course, our notation in $D_2$ should to be read modulo associativity and
commutativity of the conjunction operator.  Notice that $D_2$ is easy to
implement, and implementing only $D_2$ is certainly sound.
Algorithm~\ref{alg:pgauss} describes then our parametric Gaussian
elimination.

\begin{algorithm}[h]
  \caption{ParametricGauss}\label{alg:pgauss}
  \begin{algorithmic}[1]
    \REQUIRE Denote $\vv = (v_1, \dots, v_P)$, $\xx = (x_1, \dots, x_N)$:
    \begin{enumerate}[(i)]
    \item matrix $A\in\ZZ[\vv]^{M\times N}$
    \item list $\xx$
    \item sublist $\yy$ of $\xx$
    \item field $K$ of characteristic $0$ with a suitable deduction
      procedure $\vdash_K$ 
    \end{enumerate}
    
    \ENSURE list $(\Gamma_{i}, \Eta_{i})_{i=1,\dots,I}$ as follows:
    \begin{enumerate}[(i)]
    \item each $\Gamma_{i}$ is a conjunction of polynomial equations and
      inequations in variables $\vv$
    \item given $\bar \vv \in K^P$, we have $\bar\vv \in G(\Gamma_i)$ for one and only
      one \emph{matching case} $i \in \{1, \dots, I\}$
    \item given $\bar \vv \in K^P$ with unique matching case $i$, $\Eta_i$ is a
      solution of $A\xx=0$ suitable for $\bar\vv$
    \item $\dim\bigl(S(\Eta_{i}) \cap \Pi_\yy(K^N)\bigr)$ is constant on $G(\Gamma_{i})$
    \end{enumerate}
    \medskip
    
    \STATE $Y:=\{\,n\in\{1,\dots,N\}\mid \text{$x_n$ in $\yy$}\,\}$
    \STATE $I:=0$
    \STATE create an empty stack
    \STATE
    $\operatorname{push}~(\true, A, 1)$
    \WHILE{stack is not empty}
    \STATE $(\Gamma, A, p):=\operatorname{pop}$
    \IF{$\Gamma\nvdash_K\false$}
    %
    %
    \IF{there is $m\in\{p,\dots,M\}\setminus Y$, $n\in\{p,\dots,N\}$ such that
      $\Gamma\vdash_K A_{mn}\neq0$}
    \STATE in $A$, swap rows $p$ with $m$ and columns $p$ with $n$
    \STATE in $A$, use row $p$ to obtain $A_{p+1,p}= \dots = A_{m,p}=0$
    \STATE $\operatorname{push}~(\Gamma, A, p+1)$
    \ELSIF{there is $m\in\{p,\dots,M\}\setminus Y$, $n\in\{p,\dots,N\}$ such
      that $\Gamma\nvdash_K A_{mn}=0$}
    \STATE $\operatorname{push}~(\Gamma\land A_{mn}\neq0, A, p)$
    \STATE in $A$, set $A_{mn}:=0$ \{this is an optional optimisation of the
    Algorithm\}
    \STATE $\operatorname{push}~(\Gamma\land A_{mn}=0, A, p)$
    \ELSIF{there is $m\in\{p,\dots,M\}\cap Y$, $n\in\{p,\dots,N\}$ such that
      $\Gamma\vdash_K A_{mn}\neq0$}
    %
    %
    \STATE in $A$, swap rows $p$ with $m$ and columns $p$ with $n$
    \STATE in $A$, use row $p$ to obtain $A_{p+1,p}= \dots = A_{m,p}=0$
    \STATE $\operatorname{push}~(\Gamma, A, p+1)$
    \ELSIF{there is $m\in\{p,\dots,M\}\cap Y$, $n\in\{p,\dots,N\}$ such that
      $\Gamma\nvdash_K A_{mn}=0$}
    %
    %
    \STATE $\operatorname{push}~(\Gamma \land A_{mn}\neq0, A, p)$
    \STATE in $A$, set $A_{mn}:=0$ \{this is an optional optimisation of the
    Algorithm\}
    \STATE $\operatorname{push}~(\Gamma \land A_{mn}=0, A, p)$
    \ELSE[$A$ is in row echelon form modulo $\Gamma$]
    \STATE
    $I:=I+1$
    \STATE $(\Gamma_I, \Eta_I) := (\Gamma, \text{construct $\Eta_I$ from $A$})$
    \ENDIF
    %
    \ENDIF
    \ENDWHILE
    \RETURN $(\Gamma_i,\Eta_i)_{i=1,\dots,I}$
  \end{algorithmic}
\end{algorithm}

\begin{proposition}\label{prop:termination}
  Algorithm~\ref{alg:pgauss} terminates.
\end{proposition}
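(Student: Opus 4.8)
The plan is to regard the execution as a depth-first traversal of the \emph{computation tree} whose nodes are the triples $(\Gamma, A, p)$ that are ever pushed onto the stack, and to prove that this tree is finite; since each pushed triple is popped exactly once, the \textbf{while} loop then performs only finitely many iterations. Because every iteration pushes at most two new triples, the tree is binary, so it suffices to exhibit a uniform bound on the length of every root-to-leaf path: a binary tree of bounded depth is finite. I would obtain such a bound from the pivot counter $p$ together with a bookkeeping count of how many submatrix entries the current guard has already \emph{decided}.

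First I would record that $p$ never decreases along a path: it is incremented only in the two pivot branches (the cases with $\Gamma \vdash_K A_{mn} \neq 0$), is left unchanged by the two splitting branches, and the terminal \textbf{else} branch produces a leaf. Moreover, once $p > \min(M, N)$ at least one of the index ranges $\{p, \dots, M\}$, $\{p, \dots, N\}$ is empty, every case guard fails, and the \textbf{else} branch fires; hence $p \le \min(M, N) + 1$ throughout. It therefore remains to bound, for each fixed value $p_0$, the number of consecutive steps a path can spend at $p = p_0$.

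This is the crux. Along such a segment every non-leaf step must be a splitting step, since a pivot would raise $p_0$ and the \textbf{else} branch is a leaf; in particular no row operations are performed, so the matrix $A$ stays literally constant throughout the segment. I would first run the argument with the optional line $A_{mn} := 0$ omitted and absorb it afterwards. A split selects an entry $(m, n)$ with $p_0 \le m \le M$, $p_0 \le n \le N$ and $\Gamma \nvdash_K A_{mn} = 0$. By $D_2$ this entry is \emph{syntactically undecided}: neither $A_{mn} = 0$ nor $A_{mn} \neq 0$ occurs as a conjunct of $\Gamma$, for the former would let $D_2$ derive $A_{mn} = 0$, contradicting the split guard, and the latter would let $D_2$ derive $A_{mn} \neq 0$, so that the higher-priority pivot branch for $(m, n)$ would have fired instead of the split. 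Both children append one of these literals for $(m, n)$, and since conjuncts are only ever added, $D_2$ keeps $(m, n)$ decided in the entire subtree below. Consequently each split step along the path strictly increases the number of decided entries of the \emph{fixed} submatrix, a quantity bounded by $(M - p_0 + 1)(N - p_0 + 1) \le MN$. Thus a path spends at most $MN + 1$ steps at each value of $p$, and every path has length at most $(\min(M, N) + 1)(MN + 1)$.

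Combining the two bounds, the computation tree is binary of depth at most $(\min(M, N) + 1)(MN + 1)$, hence finite, and the algorithm terminates. The main obstacle, and the only point where the hypotheses on $\vdash_K$ enter, is the decidedness invariant of the previous paragraph: it rests on $D_2$ to turn an appended literal into a derivable one, and on the observation that the matrix is frozen during a fixed-$p$ segment, so that ``the entry $A_{mn}$'' refers to one and the same polynomial throughout. Soundness $D_1$ is needed only for the harmless remark that a literal $0$ entry can never be certified nonzero, which is precisely what lets the optional optimisation $A_{mn} := 0$ be reinstated without lengthening any path and thus be absorbed into the same count.
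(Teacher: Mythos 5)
Your core argument is correct, and it takes a genuinely different route from the paper's. The paper reasons about the whole stack at once: to each stack element $s=(\Gamma,A,p)$ it attaches the pair $\bigl(\min\{M,N\}-p,\;\mu_2(s)\bigr)$, where $\mu_2(s)$ counts the entries of the submatrix $\{p,\dots,M\}\times\{p,\dots,N\}$ that are \emph{derivability}-undecided (neither $\Gamma\vdash_K A_{mn}=0$ nor $\Gamma\vdash_K A_{mn}\neq0$), and it concludes termination from well-foundedness of the induced multiset order, since each loop iteration replaces one pair by finitely many lexicographically smaller ones. You instead analyse a single root-to-leaf branch of the binary computation tree and extract an explicit depth bound $(\min\{M,N\}+1)(MN+1)$. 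The two measures are cousins---both are lexicographic in $p$ and in a count of undecided entries---but your counter is \emph{syntactic} (a literal for the entry occurs as a conjunct of the guard) rather than derivability-based, and this is a real advantage: syntactic decidedness is trivially preserved when conjuncts are added, whereas the strict decrease of the paper's $\mu_2$ in the splitting cases tacitly requires that $\vdash_K$ be monotone (adding a conjunct never destroys derivability), a property that follows from neither $D_1$ nor $D_2$ and is nowhere assumed. Your route also yields a concrete worst-case recursion depth, which the abstract well-foundedness argument does not provide.

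The one weak point is your absorption of the optional line $A_{mn}:=0$. Soundness $D_1$ does prevent a zeroed entry from being chosen as a \emph{pivot} (at least under a satisfiable guard), but the threat to your counter is a further \emph{split} on that entry: after the assignment, position $(m,n)$ holds the zero polynomial while the guard only carries the conjunct $q=0$ for the \emph{original} entry $q$, so the position is again syntactically undecided, and nothing in $D_1$, $D_2$ forces $\Gamma'\vdash_K 0=0$ (the paper explicitly allows $\vdash_K$ to implement ``only $D_2$''). Hence a pathological but admissible $\vdash_K$ can lengthen paths, contrary to your claim that no path gets longer. The damage is bounded and the fix is one sentence: such a re-split installs the literals of the zero polynomial as conjuncts, after which $D_2$ keeps the position permanently decided within the segment, so each position hosts at most two splits and your depth bound merely doubles. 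For what it is worth, the paper's own one-line measure argument glosses over exactly the same point (its $\mu_2$ need not drop for the child receiving the zeroed matrix), in addition to the monotonicity issue above, so with this patch your proof is the more careful of the two.
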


\begin{proof}
  For each possible stack element $s = (\Gamma, A, p)$ define
  \begin{eqnarray*}
    \mu_1(s) &=& \min{\{M, N\}} - p \in \NN,\\
    \mu_2(s) &=& |\{\,(m,n) \in \{p,\dots,M\} \times \{p,\dots,N\} :
               \text{$\Gamma \nvdash A_{mn} \neq 0$ and $\Gamma \nvdash  A_{mn} = 0$}\,\}| \in \NN.
  \end{eqnarray*}
  During execution, we associate with the current stack a multiset
  \begin{displaymath}
    \mu(S)=\{\,(\mu_1(s), \mu_2(s))\in\NN^2\mid s\in S\,\}.
  \end{displaymath}
  Every execution of the while-loop removes from $\mu(S)$
  exactly one pair and adds to $\mu(S)$ at most finitely many pairs, all of
  which are lexicographically smaller than the removed one. This guarantees
  termination, because the corresponding multiset order is well-founded
  \cite{BaaderNipkow:98a}.
\end{proof}

It is obvious that the output of Algorithm~\ref{alg:pgauss} satisfies
property (i) of its specification from the way the guards $\Gamma_{i}$ are
constructed.  The same is true for property (iii), as
Algorithm~\ref{alg:pgauss} determines for each arising case a row echelon
form where the guard $\Gamma_{i}$ ensures that all pivots are non-vanishing
on $G(\Gamma_{i})$.  Finally, property (iv) is a consequence of our
pivoting strategy: pivots in $\yy$-columns are chosen only when all
remaining $\xx$-columns contain only zeros in their relevant part.  Hence
Algorithm~\ref{alg:pgauss} produces a row echelon form where rows with a
pivot in a $\yy$-column can only occur in the bottom rows after all the
rows with pivots in $\xx$-columns.  As a by-product, our pivoting strategy
has the effect that the algorithm prefers the variables in $\yy$ over the
remaining variables when it chooses the independent variables.  The next
proposition proves property (ii) and thus the correctness of
Algorithm~\ref{alg:pgauss}.  We remark that Ballarin and Kauers
\cite[Section~5.3]{BallarinKauers:04a} observed that the well-known
approach taken by Sit \cite{Sit:92a} does not have this property which is
crucial for our application of parametric Gaussian elimination in the
context of differential equations.

\begin{proposition}\label{prop:cd}
  Let $(\Gamma_i, \Eta_i)_{i=1,\dots,I}$ be an output obtained from
  Algorithm~\ref{alg:pgauss}. Then
  \begin{displaymath}
    G(\Gamma_i) \cap G(\Gamma_j) = \emptyset \quad (i \neq j), \qquad
    \bigcup_{i=1}^IG(\Gamma_i) = K^P.
  \end{displaymath}
  In other words, given $\bar\vv \in K^P$, there is one and only one
  $i \in \{1, \dots, I\}$ such that $K, (\vv = \bar\vv) \models \Gamma_i$.
\end{proposition}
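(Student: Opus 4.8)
The plan is to argue by a loop invariant for the \textsc{while}-loop of Algorithm~\ref{alg:pgauss} that simultaneously tracks the guards currently on the stack and the guards already emitted. At any moment during the execution, let $\mathcal{P}$ denote the collection of guards $\Gamma$ occurring in stack elements $(\Gamma, A, p)$ (the \emph{pending} guards), and let $\Gamma_1, \dots, \Gamma_I$ be the guards recorded so far (the \emph{done} guards). The invariant I would maintain is that the indexed family
\[
\{\, G(\Gamma) \mid \Gamma\in\mathcal{P} \,\}\;\cup\;\{\, G(\Gamma_i) \mid 1\le i\le I \,\}
\]
is pairwise disjoint and has union $K^{P}$. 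Since Proposition~\ref{prop:termination} already guarantees termination, the stack is empty when the loop exits, so $\mathcal{P}=\emptyset$ and the invariant collapses to exactly the two asserted identities $G(\Gamma_i)\cap G(\Gamma_j)=\emptyset$ for $i\neq j$ and $\bigcup_i G(\Gamma_i)=K^{P}$.

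The base case is immediate: the stack is initialised with the single element $(\true, A, 1)$ and nothing has been recorded, so the family is just $\{G(\true)\}=\{K^{P}\}$, a trivially disjoint cover. For the inductive step I would verify that each iteration preserves the invariant by going through the ways the popped element $(\Gamma, A, p)$ can be processed. If $\Gamma\vdash_K\false$, soundness $D_1$ gives $K,\Gamma\models\false$, hence $G(\Gamma)=\emptyset$, and dropping an empty member from a disjoint cover leaves a disjoint cover. In either branch where a definite pivot is found (the two $\Gamma\vdash_K A_{mn}\neq0$ cases), the algorithm pushes a single successor carrying the \emph{same} guard $\Gamma$, so $G(\Gamma)$ is unchanged and the family is preserved verbatim. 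In either branch where the vanishing of an entry cannot be decided, $\Gamma$ is replaced by the two guards $\Gamma\land A_{mn}\neq0$ and $\Gamma\land A_{mn}=0$; because for every $\bar\vv\in K^{P}$ exactly one of $A_{mn}(\bar\vv)=0$ and $A_{mn}(\bar\vv)\neq0$ holds, these two guards split $G(\Gamma)$ into two disjoint pieces with union $G(\Gamma)$, so both disjointness (these pieces are subsets of $G(\Gamma)$, which was disjoint from the rest) and coverage are retained. Here the optional assignment $A_{mn}:=0$ touches only the matrix and not the guard, and is therefore irrelevant to property (ii). Finally, in the terminal branch the guard $\Gamma$ is merely transferred from pending to done, changing neither the union nor the disjointness.

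The one genuinely delicate point, and the step I would set out most carefully, is the $\false$-pruning inside the outer conditional: this is the only place where coverage could be silently lost, and it is justified precisely because $D_1$ certifies that a derivably contradictory guard describes the empty set. By contrast, the disjointness half of the statement never appeals to $D_1$ at all; it is a purely combinatorial consequence of the fact that the sole branch enlarging the guard family splits one region into two complementary subregions, while every other branch only relabels or preserves regions. Assembling the invariant with the termination guarantee of Proposition~\ref{prop:termination} then yields the proposition, and in particular the ``one and only one'' reformulation follows by reading ``$\bar\vv$ lies in exactly one $G(\Gamma_i)$'' off the disjoint cover.
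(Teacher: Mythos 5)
Your proof is correct and follows essentially the same route as the paper's: both arguments proceed by induction over the iterations of the while-loop, maintaining the invariant that the guards currently on the stack together with the guards already emitted describe pairwise disjoint subsets of $K^P$ whose union is all of $K^P$, and both conclude by combining this invariant with the termination guarantee of Proposition~\ref{prop:termination}; your treatment of the splitting branch (two complementary guards $\Gamma \land A_{mn} \neq 0$ and $\Gamma \land A_{mn} = 0$ partitioning $G(\Gamma)$) and of the pivot and output branches matches the paper's cases (a)--(e). There is, however, one point where your write-up is more complete than the paper's. The paper's case analysis covers only the five branches \emph{inside} the if-statement in line 7, i.e.\ it tacitly assumes $\Gamma \nvdash_K \false$; the case where the popped guard is discarded because $\Gamma \vdash_K \false$ is not treated at all. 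As you correctly single out, this is exactly the delicate step: coverage is preserved there only because soundness $D_1$ guarantees $G(\Gamma) = \emptyset$, while disjointness never appeals to $\vdash_K$. Your observation also sharpens the remark the paper makes right after the proof, namely that Proposition~\ref{prop:cd} ``does not refer to $\vdash_K$ except for the termination result'': in fact the coverage half of the invariant does depend on $D_1$ at the pruning step --- with an unsound deduction that derived $\false$ from $\true$, the algorithm would return an empty list and $\bigcup_i G(\Gamma_i)$ would not equal $K^P$. So rather than deviating from the paper, your proof closes a small but genuine gap in its case analysis.
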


\begin{proof}
  We consider a run of Algorithm~\ref{alg:pgauss} with output
  $(\Gamma_i, \Eta_i)_{i=1,\dots,I}$. We observe the state $\QQQ_k$ of the algorithm
  right before the $k$th iteration of the test for an empty stack in line 5: Let
  $\QQQ_k = \SSS_k \cup \RRR_k$ where ${\SSS_k = \{\,\Gamma \mid
    \text{$(\Gamma, A, p)$ on the stack for some $A$, $p$}\,\}}$ and
  $\RRR_k = \{\Gamma_1, \dots, \Gamma_I\}$. Line 5 is executed at least once and, by
  Proposition~\ref{prop:termination}, only finitely often, say $\ell$ times. The
  $\ell$th test fails with an empty stack, $\SSS_\ell = \emptyset$, and
  $\QQQ_\ell = \RRR_\ell = \{\Gamma_1, \dots, \Gamma_I\}$ contains the guards of the output. It now
  suffices to show the following invariants of $\QQQ_k$:
  \begin{enumerate}[${I}_1.$]
  \item $G(\Gamma) \cap G(\Gamma') = \emptyset$ for $\Gamma$, $\Gamma' \in \QQQ_k$
    with $\Gamma \neq \Gamma'$,
  \item $\bigcup_{\Gamma\in \QQQ_k}G(\Gamma)=K^P$.
  \end{enumerate}
  The initialisations in lines 2 and 4 yield $\QQQ_1=\{\true\}$, which satisfies
  both $I_1$ and $I_2$. Assume now that $\QQQ_k$ satisfies $I_1$ and $I_2$, and
  consider $\QQQ_{k+1}$. In line 6, $\Gamma$ is removed from
  $\SSS_k \subseteq \QQQ_k$. Afterwards one and only one of the following cases applies:
  \begin{enumerate}[(a)]
  \item The if-condition in line 8 holds: Then
    $\QQQ_{k+1} = \bigl((\SSS_k \setminus \{\Gamma\}) \cup \{\Gamma\}\bigr) \cup \RRR_k = \QQQ_k$.
  \item The if-condition in line 12 holds: Then
    \begin{displaymath}
      \QQQ_{k+1}=\bigl((\SSS_k \setminus \{\Gamma\}) \cup \{\Gamma \land A_{mn} \neq 0, \Gamma \land A_{mn} =
      0\}\bigr) \cup \RRR_k.
    \end{displaymath}
    To show $I_1$, consider $\Gamma \land A_{mn} \neq 0 \in \QQQ_{k+1}$, and let
    $\Gamma' \in \QQQ_{k+1}$ with
    $\Gamma' \dotneq (\Gamma \land A_{mn} \neq 0)$. Using $I_1$ for $\QQQ_k$, we obtain
    \begin{displaymath}
      G(\Gamma \land A_{mn} \neq 0) \cap G(\Gamma') \subseteq G(\Gamma) \cap G(\Gamma') \dotequal \emptyset.
    \end{displaymath}
    The same argument holds for $\Gamma \land A_{mn} = 0 \in \QQQ_{k+1}$. To show
    $I_2$, we use $I_1$ for $\QQQ_{k+1}$ and $I_2$ for $\QQQ_k$ to obtain
    \begin{align*}
      \bigcup_{\Delta \in \QQQ_{k+1}}G(\Delta)
      \dotequal \bigcup_{\Delta \in \QQQ_k\atop \Delta \neq \Gamma} G(\Delta)
         \cup G(\Gamma \land A_{mn} \neq 0)
         \cup G(\Gamma \land A_{mn} = 0)
      \dotequal \bigcup_{\Delta \in \QQQ_{k}} G(\Delta)
      \dotequal K^P.
    \end{align*}
  \item The if-condition in line 16 holds: Then lines 17--19 are identical to
    lines 9--11, and we proceed as in case (a).
  \item The if-condition in line 20 holds: Then lines 21--23 are identical to
    lines 13--15, and we proceed as in case (b).
  \item We reach line 26 in the else-case: Then
    $\QQQ_{k+1} = (\SSS_k \setminus \{\Gamma\}) \cup (\RRR_k \cup \{\Gamma\}) = \QQQ_k$.\qedhere
  \end{enumerate}
\end{proof}


Inspection of the proofs yields that Proposition~\ref{prop:termination}
relies on properties $D_1$ and $D_2$ of our deduction $\vdash_K$ but
remains correct also with stronger sound deductions.
Proposition~\ref{prop:cd} does not refer to $\vdash_K$ except for the
termination result in Proposition~\ref{prop:termination}.  This paves the
way for the application of heuristic simplification techniques during
deduction, which we will discuss in more detail in
Section~\ref{sec:computations}.

\section{Detecting Geometric Singularities with Logic}

The main point of this article is an algorithmic realisation of Definition
\ref{def:sing}.  Obviously, as a first step one must be able to compute the
Vessiot space $\mathcal{V}_{\rho}[\mathcal{J}_{\ell}]$ at a point
$\rho\in\mathcal{J}_{\ell}$.  As we are only interested in smooth points,
this requires only some linear algebra.  We choose as ansatz for
constructing a vector $\mathbf{v}\in\mathcal{V}_{\rho}[\mathcal{J}_{\ell}]$
a general element
$\mathbf{v}=a C^{(\ell)}_{\mathrm{trans}}+
\sum_{\alpha=1}^{m}b_{\alpha}C^{(\ell)}_{\alpha}$ of the contact space
$\mathcal{C}^{(\ell)}|_{\rho}$ where $a,\mathbf{b}$ are yet undetermined
real coefficients.  We have
$\mathbf{v}\in\mathcal{V}_{\rho}[\mathcal{J}_{\ell}]$, if and only if
$\mathbf{v}$ is tangential to $\mathcal{J}_{\ell}$.

Recall that we always assume that our semialgebraic differential equation
$\mathcal{J}_{\ell}$ is given explicitly as a finite union of basic
semialgebraic differential equations each of which is well prepared.
Furthermore, $\rho$ is a smooth point of $\mathcal{J}_{\ell}$.  Thus, if
$\rho$ is contained in several basic semialgebraic differential equations,
then the equations parts of the corresponding systems must be equivalent in
the sense that they describe the same variety.  As we will see, in this
case we can choose for the subsequent analysis any of these basic
semialgebraic differential equations; the results will be independent of
this choice.

Without loss of generality, we may therefore assume that
$\mathcal{J}_{\ell}$ is actually a basic semialgebraic differential
equation described by a basic semialgebraic system with equations $p_{i}=0$
for $1\leq i\leq r$.  By a classical result in differential geometry (see
e.\,g.\ \cite[Prop.~1.35]{olv:lgde} for a simple proof), the
vector~$\mathbf{v}$ is tangential to $\mathcal{J}_{\ell}$, if and only if
$\mathbf{v}(p_{i})=0$ for all $i$.  Hence, we obtain the following
homogeneous linear system of equations for the unknowns $a,\mathbf{b}$ in
our ansatz:
\begin{equation}\label{eq:vess}
  C^{(\ell)}_{\mathrm{trans}}(p_{i})|_{\rho}a +
  \sum_{\alpha=1}^{m}C^{(\ell)}_{\alpha}(p_{i})|_{\rho}b_{\alpha}=0\,,\qquad
  i=1,\dots,r\,.
\end{equation}
At any \emph{fixed} point $\rho\in\mathcal{J}_{\ell}$, \eqref{eq:vess}
represents a linear system with real coefficients which is elementary to
solve.  The conditions for the various cases in Definition \ref{def:sing}
can now be interpreted as follows.  A point is an irregular singularity, if
and only if the dimension of the solution space of \eqref{eq:vess} is
greater than one.  At a regular point, the one-dimensional solution space
must have a trivial intersection with $\ker{T_{\rho}\pi^{\ell}}$, i.\,e.\
be $\pi^{\ell}$-transversal.  As in our ansatz only the vector
$C^{(\ell)}_{\mathrm{trans}}$ is $\pi^{\ell}$-transversal, this is the case
if and only if we have $a\neq0$ for all nontrivial solutions of
\eqref{eq:vess}.  Expressing these considerations via the rank of the
coefficient matrix of \eqref{eq:vess} and of the submatrix obtained by
dropping the column corresponding to the unknown $a$, we arrive at the
following statement.

\begin{proposition}\label{prop:sing}
  The point $\rho\in\mathcal{J}_{\ell}$ is regular, if and only if the rank
  of the matrix $A$ with entries
  $A_{i\alpha}=C^{(\ell)}_{\alpha}(p_{i})|_{\rho}$ is $m$.  The point
  $\rho$ is regular singular, if and only if it is not regular and the rank
  of the augmented matrix
  $\Bigl( C^{(\ell)}_{\mathrm{trans}}(p_{i})|_{\rho} \mid A \Bigr)$ is $m$.
  In all other cases, $\rho$ is an irregular singularity.
\end{proposition}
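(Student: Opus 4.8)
The plan is to turn the geometric trichotomy of Definition~\ref{def:sing} into a statement purely about the ranks of two matrices, exploiting the linear isomorphism implicit in the ansatz. First I would record that the $m+1$ contact vectors $C^{(\ell)}_{\mathrm{trans}}$, $C^{(\ell)}_{1}, \dots, C^{(\ell)}_{m}$ form a basis of $\mathcal{C}^{(\ell)}|_{\rho}$, so that $(a,\mathbf{b}) \mapsto \mathbf{v} = a C^{(\ell)}_{\mathrm{trans}} + \sum_{\alpha} b_{\alpha} C^{(\ell)}_{\alpha}$ is a linear isomorphism $\RR^{m+1} \to \mathcal{C}^{(\ell)}|_{\rho}$. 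Since $\rho$ is smooth, the cited tangency criterion gives $\mathbf{v} \in T_{\rho}\mathcal{J}_{\ell}$ if and only if $\mathbf{v}(p_i)|_\rho = 0$ for all $i$, which is exactly system~\eqref{eq:vess}. Hence the isomorphism restricts to one between $\mathcal{V}_{\rho}[\mathcal{J}_{\ell}]$ and the solution space $S$ of \eqref{eq:vess}, and by rank--nullity $\dim \mathcal{V}_{\rho}[\mathcal{J}_{\ell}] = (m+1) - \operatorname{rank} B$, where $B = \bigl(C^{(\ell)}_{\mathrm{trans}}(p_i)|_\rho \mid A\bigr)$ is the augmented coefficient matrix.

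Next I would pin down the vertical part. As $C^{(\ell)}_{\mathrm{trans}}$ is the only $\pi^{\ell}$-transversal contact vector while each $C^{(\ell)}_{\alpha}$ is $\pi^{\ell}$-vertical, the pushforward $T_{\rho}\pi^{\ell}(\mathbf{v})$ is governed solely by the coefficient $a$; thus $\mathbf{v} \in \ker T_{\rho}\pi^{\ell}$ if and only if $a = 0$. Setting $a=0$ in \eqref{eq:vess} leaves precisely the homogeneous system $A\mathbf{b}=0$, so the isomorphism identifies $\mathcal{V}_{\rho}[\mathcal{J}_{\ell}] \cap \ker T_{\rho}\pi^{\ell}$ with $\ker A$, giving $\dim\bigl(\mathcal{V}_{\rho}[\mathcal{J}_{\ell}] \cap \ker T_{\rho}\pi^{\ell}\bigr) = m - \operatorname{rank} A$. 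The only further linear-algebra input is the elementary bound $\operatorname{rank} A \le \operatorname{rank} B \le \operatorname{rank} A + 1$, coming from the fact that $B$ arises from $A$ by prepending a single column; moreover $\operatorname{rank} B = \operatorname{rank} A + 1$ holds exactly when that column is independent of the columns of $A$, i.e.\ when every solution of \eqref{eq:vess} has $a=0$, i.e.\ when $\mathcal{V}_{\rho}[\mathcal{J}_{\ell}] \subseteq \ker T_{\rho}\pi^{\ell}$.

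With these two dimension formulas the three cases fall out by bookkeeping. Regularity demands $\mathcal{V}_{\rho}[\mathcal{J}_{\ell}]\cap\ker T_\rho\pi^\ell = 0$, i.e.\ $\operatorname{rank} A = m$; I then check that this already forces $\dim\mathcal{V}_{\rho}[\mathcal{J}_{\ell}] = 1$ rather than $0$, which follows once $\operatorname{rank} B \le m$ is known, since $\operatorname{rank} A = m \le \operatorname{rank} B \le m$ gives $\operatorname{rank} B = m$. Regular singularity demands $\dim\mathcal{V}_{\rho}[\mathcal{J}_{\ell}] = 1$ together with $\mathcal{V}_{\rho}[\mathcal{J}_{\ell}] \subseteq \ker T_\rho\pi^\ell$; translating, $\dim\mathcal{V}_{\rho}[\mathcal{J}_{\ell}]=1 \iff \operatorname{rank} B = m$, while verticality forces $\operatorname{rank} B = \operatorname{rank} A + 1$, hence $\operatorname{rank} A = m-1$, which is precisely ``not regular'' together with $\operatorname{rank} B = m$. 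Conversely ``$\operatorname{rank} A \neq m$ and $\operatorname{rank} B = m$'' forces $\operatorname{rank} A = m-1$ via the rank bound, recovering regular singularity. Everything else has $\operatorname{rank} B \le m-1$, i.e.\ $\dim\mathcal{V}_{\rho}[\mathcal{J}_{\ell}] > 1$, which is the irregular case.

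The step I expect to be the genuine obstacle is justifying $\dim\mathcal{V}_{\rho}[\mathcal{J}_{\ell}] \ge 1$, equivalently $\operatorname{rank} B \le m$, at every smooth point; without it the degenerate possibility $\operatorname{rank} B = m+1$ (that is, $\dim\mathcal{V}_{\rho}[\mathcal{J}_{\ell}] = 0$) would escape the trichotomy of Definition~\ref{def:sing} and falsify the ``all other cases'' clause. I would close this gap using the not-underdetermined hypothesis. The dimension count $\dim\mathcal{J}_\ell = \ell m + 1$ valid for a not-underdetermined equation yields directly $\dim\mathcal{V}_{\rho}[\mathcal{J}_{\ell}] \ge \dim\mathcal{J}_\ell + \dim\mathcal{C}^{(\ell)}|_\rho - \dim J_\ell\pi = 1$. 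Alternatively, one may argue by upper semicontinuity: both $T_\rho\mathcal{J}_\ell$ and $\mathcal{C}^{(\ell)}|_\rho$ form smooth constant-rank families on the (irreducible) smooth locus, so the dimension of their intersection is upper semicontinuous and therefore at least its generic value $1$, attained on the Zariski-dense regular and regular-singular locus noted earlier.
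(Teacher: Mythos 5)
Your core reduction is exactly the paper's: the paper states Proposition~\ref{prop:sing} with no separate proof, as a direct rank translation of the discussion preceding it, and your identifications $\mathcal{V}_{\rho}[\mathcal{J}_{\ell}]\cong\ker B$ (via the tangency criterion and the basis $C^{(\ell)}_{\mathrm{trans}},C^{(\ell)}_{1},\dots,C^{(\ell)}_{m}$ of $\mathcal{C}^{(\ell)}|_{\rho}$), $\mathcal{V}_{\rho}[\mathcal{J}_{\ell}]\cap\ker T_{\rho}\pi^{\ell}\cong\ker A$, and the subsequent rank bookkeeping are a rigorous version of precisely that argument. You also correctly isolate the point the paper passes over in silence: the trichotomy is exhaustive, and ``$\operatorname{rank}A=m\Rightarrow\text{regular}$'' is true, only if $\dim\mathcal{V}_{\rho}[\mathcal{J}_{\ell}]\geq1$ (equivalently $\operatorname{rank}B\leq m$) at every smooth point; the degenerate case $\operatorname{rank}B=m+1$ must be excluded. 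Flagging and closing this is a genuine improvement over the paper's exposition.

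However, your primary argument for closing it is flawed: the formula $\dim\mathcal{J}_{\ell}=\ell m+1$ is \emph{not} a consequence of being well prepared and not underdetermined. It holds only when every equation of the system has order exactly $\ell$; as soon as equations of order $k<\ell$ are present -- in particular for prolonged equations, which are squarely within the scope of the proposition (Proposition~\ref{prop:prolvess} exists for exactly this situation, and Section~\ref{sec:computations} applies the singularity classification to $\mathcal{J}_{2}$ and $\mathcal{J}_{3}$ of system \eqref{eq:lh1}) -- the dimension drops below $\ell m+1$. Concretely, for the first prolongation of \eqref{eq:lh1} one has $m=3$, $\ell=2$, $\dim\mathcal{J}_{2}=4$, while $\ell m+1=7$; your intersection bound then yields only $\dim\mathcal{V}_{\rho}\geq 4+4-10=-2$, which proves nothing, even though the equation is not underdetermined in the sense of Remark~\ref{rem:sing} ($\operatorname{rank}A=m$ wherever $tv\neq0$). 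Your fallback argument, by contrast, is sound and can be stated even more simply than you do: $\operatorname{rank}B(\rho)$ is lower semicontinuous in $\rho$ because $B$ has polynomial entries, and the paper's assertion that the regular and regular singular points form a Zariski open and dense subset of $\mathcal{J}_{\ell}$ gives $\dim\ker B=1$ generically; hence $\operatorname{rank}B\leq m$ on all of $\mathcal{J}_{\ell}$, in particular at every smooth point, where $\ker B=\mathcal{V}_{\rho}[\mathcal{J}_{\ell}]$. So your proof stands on its second leg; the first should be dropped or repaired.
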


\begin{remark}\label{rem:sing}
  The rigorous definition of a (not) underdetermined differential equation
  is rather technical and usually only given for regular equations without
  singularities (see e.\,g.\ \cite[Def.~7.5.6]{wms:invol}).  In the case of
  ordinary differential equations, it is straightforward to extend the
  definition to our more general situation: a basic semialgebraic
  differential equation $\mathcal{J}_{\ell}$ is not underdetermined, if and
  only if at almost all points $\rho\in\mathcal{J}_{\ell}$ the rank of the
  matrix $A$ (the so-called symbol matrix) defined in the above proposition
  is $m$.  Thus a generic point is regular, as it should be.  The geometric
  singularities form a semialgebraic set of lower dimension.
\end{remark}

\begin{example}\label{ex:sphere}
  We consider the first-order algebraic differential equation
  $\mathcal{J}_{1}\subset J_{1}\pi$ given by
  \begin{equation}
    \dot{u}^{2}+u^{2}+t^{2}-1=0\,.
  \end{equation}
  Geometrically, it corresponds to the two-dimensional unit sphere in the
  three-dimensional first-order jet bundle $J_{1}\pi$ for $m=1$ and can be
  easily analysed by hand.  The linear system \eqref{eq:vess} for the
  Vessiot spaces consists here only of one equation
  \begin{displaymath}
    (t+u\dot{u})a+\dot{u}b=0
  \end{displaymath}
  for two unknowns $a$ and $b$. The matrix $A$ introduced in
  Proposition~\ref{prop:sing} consists simply of the coefficient of $b$.
  Thus geometric singularities are characterised by the vanishing of this
  coefficient and hence form the equator $\dot{u}=0$ of the sphere.  Only
  two points on it are irregular singularities, namely $(0,\pm1,0)$, as
  there also the coefficient of $a$ vanishes and hence even the rank of the
  augmented matrix drops. All the other points on the equator are regular
  singular. In Figure~\ref{fig:sphere}, the regular singular points are
  shown in red and the two irregular singularities in yellow.  The figure
  also shows integral curves of the Vessiot distribution.  As one can see,
  they spiral into the irregular singularities and cross frequently the
  equator.  At each crossing their projections to the $t$-$u$ space change
  direction and hence they cannot be the graph of a function there. But
  between two crossings, the integral curves correspond to the graphs of
  prolonged solutions of the equation.
\end{example}

\begin{figure}
  \centering
  \includegraphics[width=0.5\textwidth]{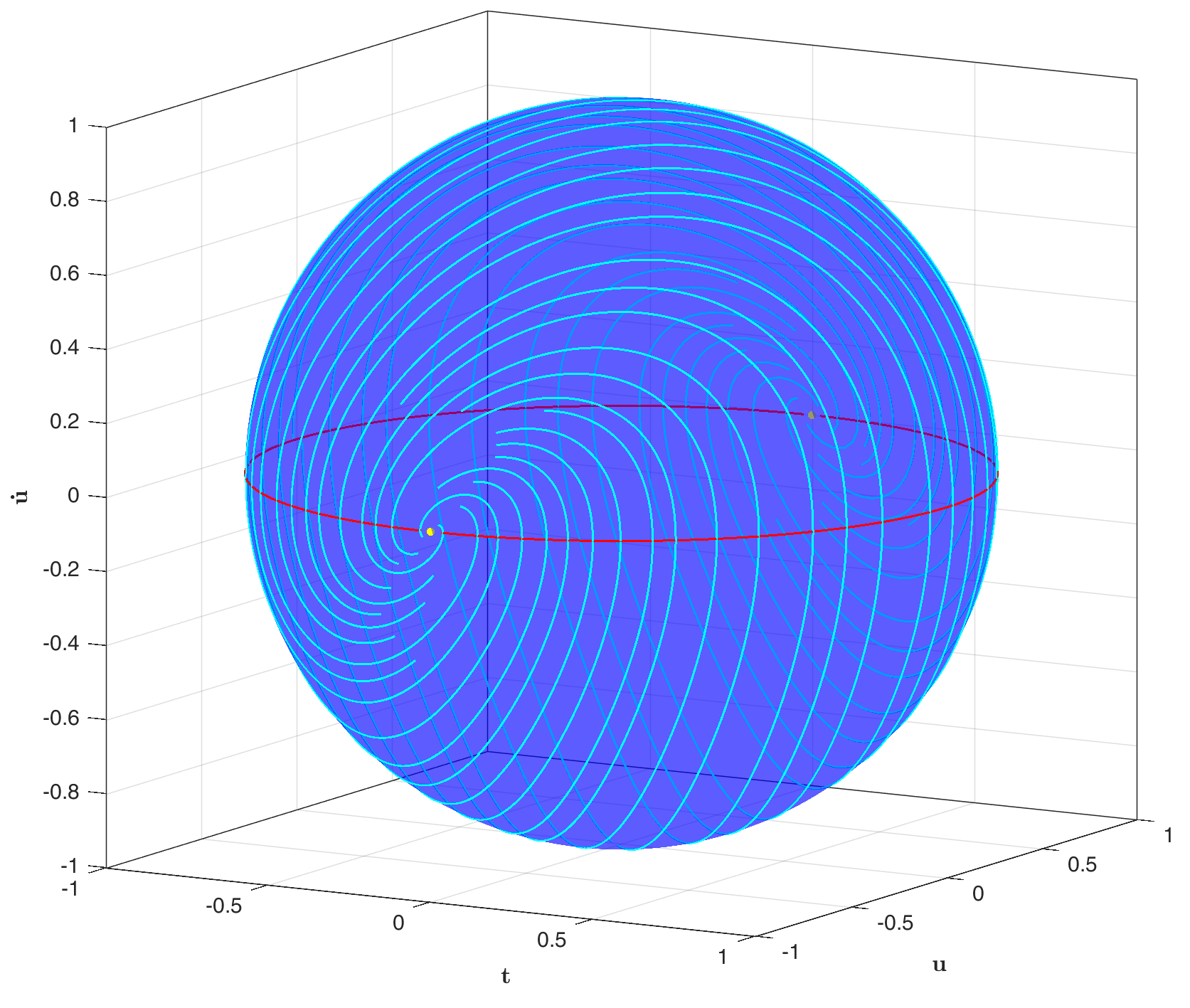}
  \caption{Unit sphere as semialgebraic differential equation}
  \label{fig:sphere}
\end{figure}
  
For systems containing equations of different orders or for systems
obtained by prolongations, the following observation (which may be
considered as a variation of \cite[Prop.~9.5.10]{wms:invol}) is useful, as
it significantly reduces the size of the linear system \eqref{eq:vess}.  It
requires that the system is well prepared, as it crucially depends on the
fact that no hidden integrability conditions are present.

\begin{proposition}\label{prop:prolvess}
  Let $\mathcal{J}_{\ell}\subset J_{\ell}\pi$ be a well-prepared basic
  semialgebraic differential equation of order~$\ell$.  Then it suffices to
  consider in the linear system \eqref{eq:vess} only those equations
  $p_{i}=0$ which are of order $\ell$; all other equations contribute only
  zero rows.
\end{proposition}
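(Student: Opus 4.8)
The plan is to inspect, for an equation $p_{i}=0$ of order $k<\ell$, the entire row it contributes to \eqref{eq:vess} and to show that each of its entries vanishes at every $\rho\in\mathcal{J}_{\ell}$. The coefficients of the unknowns $b_{\alpha}$ are $C^{(\ell)}_{\alpha}(p_{i})=\partial_{u_{\alpha}^{(\ell)}}p_{i}$; since $p_{i}$ has order $k<\ell$ it is independent of all the top coordinates $u_{\alpha}^{(\ell)}$, so these derivatives vanish identically, before any evaluation. Consequently the only potentially nonzero entry is the coefficient of $a$, and the whole claim reduces to proving $C^{(\ell)}_{\mathrm{trans}}(p_{i})|_{\rho}=0$ for all $\rho\in\mathcal{J}_{\ell}$.

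First I would identify $C^{(\ell)}_{\mathrm{trans}}(p_{i})$ with the ordinary total derivative $D_{t}p_{i}$. Expanding $C^{(\ell)}_{\mathrm{trans}}$ and using once more that $\partial_{u_{\alpha}^{(j-1)}}p_{i}=0$ whenever $j-1>k$, only the coordinates $u_{\alpha}^{(1)},\dots,u_{\alpha}^{(k+1)}$ occur in $C^{(\ell)}_{\mathrm{trans}}(p_{i})$. As $k+1\le\ell$, these are all genuine coordinates of $J_{\ell}\pi$, so the truncation of the chain rule encoded in $C^{(\ell)}_{\mathrm{trans}}$ plays no role and $C^{(\ell)}_{\mathrm{trans}}(p_{i})$ coincides with the full formal total derivative $D_{t}p_{i}$, a polynomial of order $k+1\le\ell$. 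This is exactly where the hypothesis $\mathrm{ord}(p_{i})<\ell$ is essential: for an equation of order $\ell$ the term $u_{\alpha}^{(\ell+1)}\partial_{u_{\alpha}^{(\ell)}}$ dropped by the cutoff would in general survive, and the row need not vanish.

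The main step, and the point where well-preparedness is indispensable, is to show that $D_{t}p_{i}$ vanishes on all of $\mathcal{J}_{\ell}$. I would argue geometrically: by the discussion following Definition~\ref{def:sing} the regular points form a Zariski open and dense subset of $\mathcal{J}_{\ell}$, and a sufficiently small neighbourhood of any regular point is foliated by prolonged graphs of solutions. Thus through a dense set of points $\rho$ there passes a prolonged solution $\gamma_{\mathbf{g}}$ with $\gamma_{\mathbf{g}}(t_{0})=\rho$, along which the equation holds identically, $p_{i}(\gamma_{\mathbf{g}}(t))\equiv0$. Differentiating this identity with respect to $t$ at $t_{0}$ and invoking the identification of the previous paragraph yields $(D_{t}p_{i})(\rho)=0$. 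Hence the polynomial $D_{t}p_{i}$ vanishes on a Zariski dense subset of the irreducible variety $\mathcal{J}_{\ell}=V(\mathcal{I}_{\ell})$, and therefore on all of $\mathcal{J}_{\ell}$; combined with the identical vanishing of the $b_{\alpha}$-coefficients this shows that the row of $p_{i}$ is zero.

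I expect the delicate point to be precisely this last vanishing, which fails without well-preparedness: if a hidden integrability condition were present, $D_{t}p_{i}$ could be a genuinely new constraint cutting $\mathcal{J}_{\ell}$ down to a proper subset, and the corresponding row would not vanish. Equivalently, in algebraic terms, $D_{t}p_{i}$ lies in the differential ideal generated by the equations and, having order at most $\ell$, descends to its truncation in $\mathcal{D}_{\ell}$; well-preparedness (the passivity condition, i.e.\ the absence of hidden integrability conditions) is exactly what guarantees that this truncation is captured by $\mathcal{I}_{\ell}$, so that $D_{t}p_{i}$ is a polynomial combination of $p_{1},\dots,p_{r}$ and vanishes wherever they do.
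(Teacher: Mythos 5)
Your proposal is correct under the paper's standing assumptions, but its central argument takes a genuinely different route from the paper's proof. Both texts agree on the easy part ($C^{(\ell)}_{\alpha}(p_{i})=0$ identically when $\mathrm{ord}(p_{i})<\ell$) and on the identification $C^{(\ell)}_{\mathrm{trans}}(p_{i})=Dp_{i}$, which you obtain by direct expansion and the paper via the recursion between contact fields of neighbouring orders. For the key vanishing of $Dp_{i}$ on $\mathcal{J}_{\ell}$, the paper argues purely algebraically: well-preparedness (passivity) means precisely that the prolonged equation $Dp_{i}=0$ is a linear combination of the equations $p_{1}=0,\dots,p_{r}=0$ of the system, so $Dp_{i}$ vanishes at \emph{every} point of $\mathcal{J}_{\ell}$, with no genericity, no solution theory and no irreducibility needed. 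You instead argue geometrically: through every regular point there passes a prolonged solution (the foliation fact stated after Definition~\ref{def:sing}), differentiating $p_{i}\circ\gamma_{\mathbf{g}}\equiv0$ gives $(Dp_{i})(\rho)=0$ there, and Zariski density in the irreducible variety $V(\mathcal{I}_{\ell})$ extends the vanishing. This nicely explains \emph{why} the rows vanish --- solutions automatically satisfy prolonged equations --- but it leans on more machinery: it needs the standing not-underdetermined assumption together with Remark~\ref{rem:sing} to make the regular points dense (note the discussion after Definition~\ref{def:sing} only asserts density of the union of regular and regular singular points), it needs the local foliation by solutions, and the final density step tacitly assumes that $\mathcal{J}_{\ell}$, which is cut out of $V(\mathcal{I}_{\ell})$ by inequalities, contains no points outside the Zariski closure of its regular locus (e.g.\ non-smooth or lower-dimensional strata); the algebraic argument never has to confront any of this. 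Your closing paragraph, offered as an ``equivalent'' algebraic reformulation, is in fact essentially the paper's entire proof, and it is the more robust of the two arguments.
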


\begin{proof}
  By a slight abuse of notation (more precisely, by omitting some
  pull-backs), we have the following relation between the generators of the
  contact distributions of two neighbouring orders:
  \begin{displaymath}
    C^{(k+1)}_{\mathrm{trans}} = C^{(k)}_{\mathrm{trans}} +
        \sum_{\alpha=1}^{m}u_{\alpha}^{(k+1)}C^{(k)}_{\alpha}\,.
  \end{displaymath}
  On the other hand, if $\varphi$ is any function (not necessarily
  polynomial) depending only on jet variables up to an order $k<\ell$, then
  its formal derivative is given by
  $D\varphi=C^{(k+1)}_{\mathrm{trans}}(\varphi)$.  Since we assume that
  $\mathcal{J}_{\ell}$ is well prepared, for any equation $p_{i}=0$ in the
  corresponding basic semialgebraic system of order $k<\ell$ the prolonged
  equation $Dp_{i}=0$ can be expressed as a linear combination of the
  equations contained in the system (otherwise we would have found a hidden
  integrability condition).  Because of $k<\ell$, we have
  $Dp_{i}=C^{(k+1)}_{\mathrm{trans}}(p_{i})=C^{(\ell)}_{\mathrm{trans}}(p_{i})$
  and trivially $C^{(\ell)}_{\alpha}(p_{i})=0$ for all $\alpha$.  Hence the
  row contributed by $p_{i}$ to \eqref{eq:vess} is a zero row, as
  $Dp_{i}(\rho)=0$ at any point $\rho\in\mathcal{J}_{\ell}$.
\end{proof}

For the purpose of detecting all geometric singularities in a given
semialgebraic differential equation $\mathcal{J}_{\ell}$, we must analyse
the behaviour of \eqref{eq:vess} \emph{in dependency of the point $\rho$}.
Thus we must now consider the coefficients of \eqref{eq:vess} as
polynomials in the jet variables
$(t,\mathbf{u},\mathbf{\dot{u}},\dots,\mathbf{u}^{(\ell)})$ and not as real
numbers.  Furthermore, we must augment \eqref{eq:vess} by the semialgebraic
differential system defining $\mathcal{J}_{\ell}$ and study the combined
system of equations and inequalities in the variables
$(t,\mathbf{u},\mathbf{\dot{u}}\dots,\mathbf{u}^{(\ell)},a,\mathbf{b})$. In
the approach of \cite{lrss:gsade}, one simply performs an algebraic Thomas
decomposition of this system for a suitable ranking of the variables.
While this approach is correct and identifies all geometric singularities,
it has some shortcomings.  It does not really exploit that a part of the
problem is linear and as it implicitly also determines an algebraic Thomas
decomposition of the differential equation~$\mathcal{J}_{\ell}$, it leads
in general to many redundant case distinctions, which are unnecessary for
solely detecting all real singularities, but simply reflect certain
geometric properties of the semialgebraic set $\mathcal{J}_{\ell}$.

We propose now as a novel approach to study the linear part \eqref{eq:vess}
separately from the underlying semialgebraic differential equation
$\mathcal{J}_{\ell}$ considering it as a \emph{parametric} linear system in
the unknowns $a$, $\mathbf{b}$ with the jet variables
$(t,\mathbf{u},\mathbf{\dot{u}}\dots,\mathbf{u}^{(\ell)})$ as (yet
independent) parameters.  Using parametric Gaussian elimination, all
possible different cases for the linear system are identified.  Then, in a
second step, it is verified for each case whether it occurs somewhere on
the differential equation~$\mathcal{J}_{\ell}$, i.\,e.\ we take now into
account that our parameters are not really independent but have to satisfy
a basic semialgebraic system.  If yes, we obtain by simply combining the
equations and inequalities describing the case distinction with the
equations and inequalities defining $\mathcal{J}_{\ell}$ a semialgebraic
description of the corresponding subset of $\mathcal{J}_{\ell}$.

According to Proposition~\ref{prop:sing}, the coefficient matrix $A$ of the
linear system \eqref{eq:vess} possesses the same rank at regular and at
regular singular points.  The difference between the two cases is the
relative position of the Vessiot space to the linear subspace
$W=\ker{T}_{\rho}\pi^{\ell}$: as one can see in Definition~\ref{def:sing},
at regular singular points the solution space lies in $W$, whereas at
regular points its intersection with $W$ is trivial.  For this reason, we
need a parametric Gaussian elimination in the form developed in the
previous section which takes the relative position of the solution space to
a prescribed linear (cartesian) subspace into account.  In terms of the
$m+1$ coefficients $a$, $\mathbf{b}$ of our ansatz, $W$ corresponds to the
cartesian subspace of $\RR^{m+1}$ defined by the equation $a=0$ (which we
can write as $\Pi_{a}(\RR^{m+1})$ in the notation of the last section) and
thus we solve \eqref{eq:vess} using Algorithm~\ref{alg:pgauss} with the
choice $\yy=(a)$.  This means that -- among the points with a
one-dimensional solution space -- we characterise the regular points as
those where $a$ is the free variable in our solution representation and the
regular singular points as those where $a=0$, i.\,e.\ where the
intersection of the solution space of \eqref{eq:vess} with
$\Pi_{a}(\RR^{m+1})$ is trivial.

Because of our special form of parametric Gaussian elimination and the
choice of $\yy=(a)$, all points on one of the obtained subsets
$G(\Gamma_{i})$ are of the same type in the sense of
Definition~\ref{def:sing}.  The type is easy to decide on the basis of the
form of the obtained row echelon form of the linear system (or of its
solution) on the subset.  Hence we do actually more than just detecting
singularities: we identify semialgebraic subsets of $\mathcal{J}_{\ell}$ on
which the Vessiot spaces allow for a uniform description and possess
uniform properties.  This is of great importance for a possible further
analysis of the found singularities (not discussed here).

In a more formal language, our novel approach translates into Algorithm
\ref{alg:realsing}, the correctness of which follows from the above
discussion.  Note that for computational purposes we limit ourselves to
input with integer coefficients.  The critical steps are the parametric
Gaussian elimination which may potentially lead to many case distinctions,
but which represents otherwise a linear operation.  For each obtained case,
an existential closure must be studied to check whether the case actually
occurs on $\mathcal{J}_{\ell}$.  The real quantifier elimination in
\textsc{Redlog} primarily uses virtual substitution techniques
\cite{Kosta:16a,Sturm:17a,Sturm:18a,Weispfenning:88a,Weispfenning:97b} and
falls back into partial cylindrical algebraic decomposition
\cite{Collins:75,CollinsHong:91,Seidl:06a} for subproblems where degree
bounds are exceeded.  The latter algorithm is double exponential in the
worst case \cite{Brown:2007:CQE:1277548.1277557}.  It is noteworthy that
for our special case of existential sentences also single exponential
algorithms exist \cite{Grigoriev:88a} but no corresponding implementations.

\begin{algorithm}
  \caption{RealSingularities}\label{alg:realsing}
  \begin{algorithmic}[1]
    \REQUIRE well-prepared, basic semialgebraic system
    $\Sigma_{\ell}=\bigl((p_{a}=0)_{a=1,\dots,A},
    (q_{b}>0)_{b=1,\dots,B}\bigr)$,
    where $p_a$, $q_b\in\mathcal{D}_{\ell}=
    \mathcal{D}\cap\ZZ[t,\mathbf{u},\dots,\mathbf{u}^{(\ell)}]$
    \ENSURE finite system $(\Gamma_{i},\Eta_{i})_{i=1,\dots,I}$ with
        \begin{enumerate}[(i)]
        \item each $\Gamma_{i}$ is a
          disjunctive normal form of polynomial equations,
          inequations, and inequalities over $\mathcal{D}_{\ell}$
          describing a semialgebraic subset
          $\mathcal{J}_{\ell,i}\subseteq\mathcal{J}_{\ell}$ 
        \item each $\Eta_{i}$ describes the Vessiot spaces of all points on
          $\mathcal{J}_{\ell,i}$
        \item all sets $\mathcal{J}_{\ell,i}$ are disjoint and their union
          is $\mathcal{J}_{\ell}$
        \end{enumerate}
    \STATE set up the matrix $A$ of the linear system \eqref{eq:vess} using
    the equations $(p_{a}=0)_{a=1,\dots,A}$
    \STATE  $\Pi=\bigl(\gamma_{\tau},\Eta_{\tau}\bigr)_{\tau=1,\dots,t}:=
        \mathtt{ParametricGauss}\bigl(A,(\mathbf{b},a),(a),\RR\bigr)$   
    \FOR{$\tau:=1,\dots,t$}
    \STATE let $\Gamma_{\tau}$ be a disjunctive normal form of
    $\gamma_{\tau}\land\bigwedge\Sigma_{\ell}$
    \STATE check satisfiability of $\Gamma_{\tau}$ using real quantifier
    elimination on 
    $\exists t\,\exists\mathbf{u}\dots\exists\mathbf{u}^{(\ell)}\,
    \Gamma_{\tau}$
        \IF{$\Gamma_{\tau}$ is unsatisfiable}
            \STATE delete $(\gamma_{\tau},\Eta_{\tau})$ from $\Pi$
        \ELSE
            \STATE replace $(\gamma_{\tau},\Eta_{\tau})$  by
            $(\Gamma_{\tau},\Eta_{\tau})$ in $\Pi$
        \ENDIF
    \ENDFOR
    \STATE \textbf{return} $\Pi$    
  \end{algorithmic}
\end{algorithm}

\begin{remark}\label{rem:nf}
  It should be noted that the form of the guards $\Gamma_{i}$ appearing in
  the output is not uniquely defined.  We produce a disjunctive normal
  form, as it is easier to interpret.  However, many equivalent expressions
  can be obtained by performing some simplification steps and in particular
  by trying to factorise the polynomials appearing in the clauses.  In the
  fairly simple examples considered in the next section, we always obtained
  an ``optimal'' form where no clause can be simplified any more.  In
  larger examples, this will not necessarily be the case and it is
  non-trivial to define what ``optimal'' actually should mean.
\end{remark}

\begin{remark}\label{rem:para}
  Many differential equations arising in applications depend on
  \emph{parameters}~$\boldsymbol{\chi}$, i.\,e.\ the polynomials $p_{a}$
  and $q_{b}$ defining the equations and inequalities of the corresponding
  basic semialgebraic system depend not only on the jet variables
  $(t,\mathbf{u},\dots,\mathbf{u}^{(\ell)})$, but in addition on some real
  parameters~$\boldsymbol{\chi}$.  Such situations can still be handled by
  Algorithm~\ref{alg:realsing}.  A straightforward solution consists of
  considering the parameters as additional unknown functions $\mathbf{u}$
  and adding to the given semialgebraic system the differential equations
  $\dot{\boldsymbol{\chi}}=0$ (of course, one can this way also
  incorporated easily conditions on the parameters like positivity
  constraints by adding corresponding inequalities).

  However, it is easier to apply directly Algorithm~\ref{alg:realsing} with
  only some trivial modifications.  We consider $p_{a}$ and $q_{b}$ as
  elements of the polynomial ring $\mathcal{D}_{\ell}[\boldsymbol{\chi}]$.
  For the parametric Gaussian elimination, there is no difference between
  the parameters $\boldsymbol{\chi}$ and the jet variables
  $(t,\mathbf{u},\dots,\mathbf{u}^{(\ell)})$: all of them represent
  parameters of the linear system of equations \eqref{eq:vess} for the
  Vessiot spaces.  Thus in the output of the elimination step, the
  guards~$\gamma_{\tau}$ will now generaly depend on both the jet variables
  $(t,\mathbf{u},\dots,\mathbf{u}^{(\ell)})$ and the additional parameters
  $\boldsymbol{\chi}$, i.\,e.\ they will also be defined in terms of
  polynomials in $\mathcal{D}_{\ell}[\boldsymbol{\chi}]$.  Hence the guards
  $\gamma_{\tau}$ returned in the second line of
  Algorithm~\ref{alg:realsing} will be defined by such polynomials, too. In
  the fifth line, we still consider only the existential closure over the
  jet variables $(t,\mathbf{u},\dots,\mathbf{u}^{(\ell)})$.  The outcome of
  the satisfiability check is now either ``unsatisfiable'' or a formula
  over the remaining parameters $\boldsymbol{\chi}$.  The only change in
  the algorithm is that in the latter case we must augment $\Gamma_{\tau}$
  by the obtained formula (and recompute a disjunctive normal form).  Note
  that the guards produced by the parametric Gaussian elimination always
  consist only of equations and inequations.  By contrast, the possibly
  appearing additional satisfiability conditions on the
  parameters~$\boldsymbol{\chi}$ are produced by a quantifier elimination
  and can be arbitrary inequalities.
\end{remark}

\section{Computational Experiments}\label{sec:computations}

We will now study the practical applicability and quality of results of the
approach developed in this article on several examples.  To this end, we
have realised a prototype implementation of Algorithm~\ref{alg:pgauss} and
Algorithm~\ref{alg:realsing} in \textsc{Reduce} \cite{Hearn:67a,Hearn:05a},
which is not yet ready for publication.  We chose \textsc{Reduce} because
on the one hand it is an open-source general purpose computer algebra
system, and on the other hand its \textsc{Redlog} package
\cite{DolzmannSturm:97a,Sturm:06a,Sturm:07a} provides a suitable
infrastructure for computations in interpreted first-order logic as
required by our approach.  Although technically a ``package'',
\textsc{Redlog} establishes a quite comprehensive software system on top of
\textsc{Reduce}. Systematically developed and maintained since 1995, it has
received more than 400 citations in the scientific literature, mostly for
applications in the sciences and in engineering. Its current code base
comprises around 65~KLOC.

Our implementation of Algorithm~\ref{alg:pgauss} uses from \textsc{Redlog}
fast and powerful simplification techniques for quantifier-free formulas
over the reals for the realisation of a nontrivial deduction procedure
$\vdash_K$.  We specifically apply the standard simplifier for ordered
fields originally described in \cite[Sect.~5.2]{DolzmannSturm:97c}; one
notable improvement since is the integration of identification and special
treatment of positive variables as a generalisation of the concept of
positive quantifier elimination described in
\cite{SturmWeber:08a,SturmWeber:09a}.  Our implementation of
Algorithm~\ref{alg:realsing} uses -- corresponding to line 5 --
implementations of real quantifier elimination, specifically virtual
substitution \cite{Kosta:16a} and partial cylindrical algebraic
decomposition \cite{Seidl:06a} as a fallback option when exceeding degree
limits for virtual substitution.

The presented examples were chosen for their simplicity allowing for any
easy check of the results with hand calculations and for the possibility to
apply also the complex algorithm of \cite{lrss:gsade} for comparison
purposes. They do not represent real benchmarks testing the feasibility of
the presented approach for large scale problems. However, they already
demonstrate the potential of our approach to concisely and explicitly
provide interesting insights into the appearance of singularities of
ordinary differential equations.  On a standard laptop, the required
computing times were on the scale of milliseconds.  We will report timings
for more serious problems elsewhere.

\begin{example}
  We continue with Example \ref{ex:sphere}, the unit sphere as first-order
  differential equation $\mathcal{J}_{1}$, and show the results of an
  automatised analysis.  Our implementation returns for the corresponding
  semialgebraic differential system
  $\Sigma_{1}=(\dot{u}^{2}+u^{2}+t^{2}-1=0)$ as input a list with three
  pairs:
  \begin{align*}
   (\Gamma_1,\Eta_1) &= \left( \dot{u} \neq 0 \, \wedge \, \dot{u}^2 + u^2 +
                    t^2 -1=0 , \,  
   \{ a =r_1, \, b=  - r_1 ( u + t \dot{u} )^{-1}  \} \right), \\
    (\Gamma_2,\Eta_2) &= \left(  t \neq 0 \, \wedge \, u^2+t^2-1 =0 \, \wedge
                     \, \dot{u}=0 , \,  
   \{ a=0 , \, b = r_2  \} \right), \\
   (\Gamma_3,\Eta_3) &= \left( t=0 \, \wedge \, u^2-1 =0 \, \wedge \,
                    \dot{u}=0 , \,  
   \{ a=r_3 , \, b = r_4 \} \right).
  \end{align*}
  It is easily seen that each guard $\Gamma_i$ describes a semialgebraic
  subset $\mathcal{J}_{1,i}\subset\mathcal{J}_{1}$ and that these sets are
  pairwise disjoint.  Each set $\Eta_i$ parametrises the Vessiot spaces at
  the points of $\mathcal{J}_{1,i}$ and one can easily read off their
  dimensions.  At each point on $\mathcal{J}_{1,1}$, the dimension is
  clearly one, since $\Eta_{1}$ contains one free variable $r_{1}=a$.  The
  dimension of the Vessiot space at each point of $\mathcal{J}_{1,2}$ is
  also one because of the free variable $r_{2}=b$, but as $\Eta_{2}$
  comprises the equation $a=0$, the Vessiot spaces are everywhere vertical.
  The set $\Eta_3$ contains two free variables $r_{3}=a$, $r_{4}=b$ so that
  everywhere on $\mathcal{J}_{1,3}$ the dimension is two.  According to
  Definition~\ref{def:sing}, the points on $\mathcal{J}_{1,1}$ are regular,
  the points on $\mathcal{J}_{1,2}$ regular singular and the two points on
  $\mathcal{J}_{1,3}$ irregular singular.  Thus we exactly reproduce the
  result of the analysis by hand presented in Example \ref{ex:sphere}.
  
  Applying the complex analysis of \cite{lrss:gsade} (more precisely, a
  \textsc{Maple} implementation of it provided by one of the authors of
  \cite{lrss:gsade}) to this example, we find that the algebraic step
  yields five cases.  One of them contains no real points at all.
  Furthermore, for the regular singular points an unnecessary case
  distinction is made by treating the two points $(\pm1,0,0)$ as a special
  case.  This distinction is not due to the behaviour of the linear system
  \eqref{eq:vess}, but stems from an algebraic Thomas decomposition of the
  sphere.  If we consider only the $\mathbb{R}$-rational points in each
  case and combine the two cases describing regular singular points, the
  result coincides with the one obtained here.
\end{example}

Thus, even in such a simple example consisting only of a scalar first-order
equation, all the potential problems of applying the complex analysis of
\cite{lrss:gsade} to real differential equations already occur.  We obtain
too many cases.  Some are completely irrelevant for a real analysis, as
they do not contain real points (in some situations, it might be non
trivial to decide whether a case contains at least some real points).
Other cases are at least irrelevant for detecting singularities.
Sometimes, the underlying case distinctions are important for a further
analysis of the singularities, but often they are simply due to the Thomas
decomposition and have no intrinsic meaning.

\begin{example}
  Dara \cite{ld:singgen} resp.\ Davydov \cite{aad:normform} classified the
  possible singularities of generic scalar first order equations
  $F(t,u,\dot{u})=0$ providing normal forms for all arising cases.  One
  distinguishes two classes: \emph{folded} and \emph{gathered}
  singularities, respectively.  In this example, we consider the gathered
  class.  It is characterised by the normal form
  \begin{equation}\label{eq:gather}
    \dot{u}^{3} + \chi u\dot{u} - t = 0
  \end{equation}
  with a real parameter $\chi$.  Values $\chi>0$ correspond to the
  \emph{hyperbolic gather}, whereas values $\chi<0$ lead to the
  \emph{elliptic gather} (classically, one considers $\chi=\pm1$).  Again,
  it is straightforward to analyse \eqref{eq:gather} by hand.  The linear
  equation for the Vessiot distribution is given by
  \begin{displaymath}
    (-1+\chi\dot{u}^{2})a+(3\dot{u}^{2}+\chi u)b=0\,.
  \end{displaymath}
  Thus the singularities lie on the parabola $3\dot{u}^{2}+\chi u=0$.  In
  the hyperbolic case, we find two real irregular singularities at
  $(\mp2/\sqrt{\chi^{3}},-3/\chi^{2},\pm1/\sqrt{\chi})$ where both
  coefficients of the linear equation vanish; in the elliptic case no real
  irregular singularities exist (see Figure~\ref{fig:ellhyp}).

  \begin{figure}
    \begin{minipage}[t]{0.45\textwidth}
      \includegraphics[width=\textwidth]{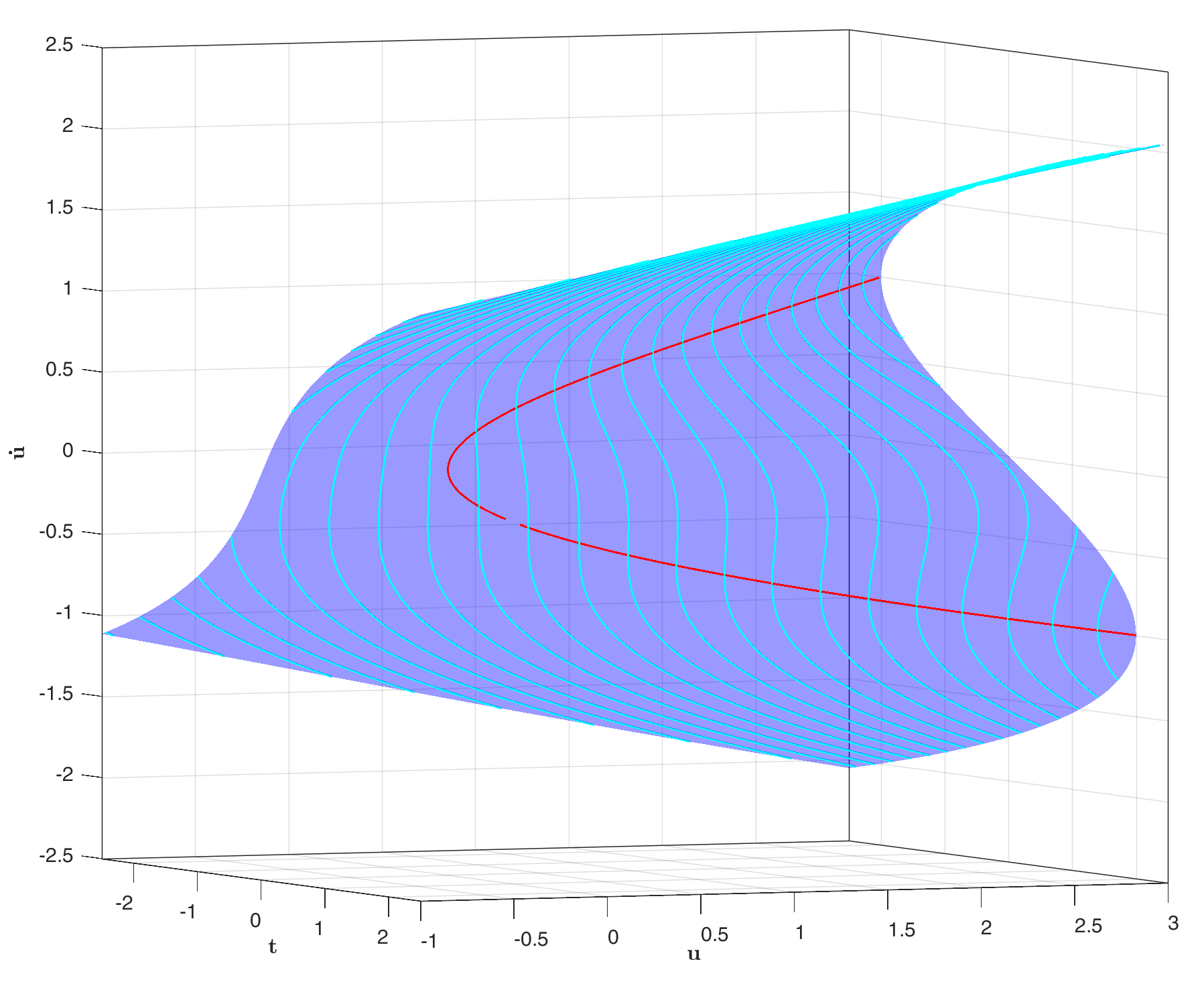}
    \end{minipage}
    \begin{minipage}[t]{0.45\textwidth}
      \includegraphics[width=\textwidth]{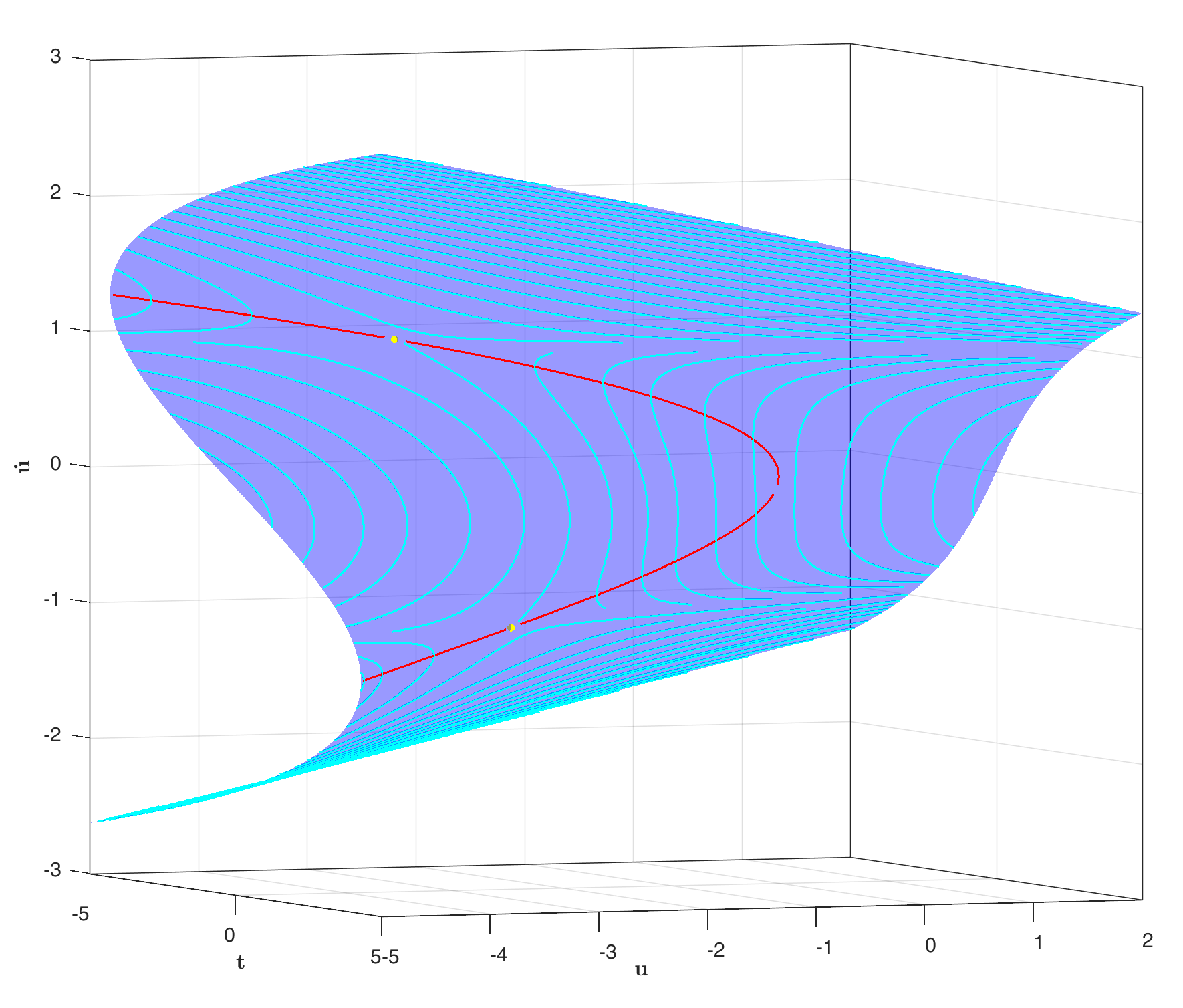}
    \end{minipage}
    \caption{Elliptic and hyperbolic gather}
    \label{fig:ellhyp}
  \end{figure}
  
  Our implementation applied to the parametric differential equation
  \eqref{eq:gather} returns three pairs:
  \begin{align*}
    (\Gamma_1,\Eta_1) &= \left( 3 \dot{u}^2 + \chi u \neq 0 \, \wedge \,
                       \dot{u}^3 + \chi \dot{u} u - t=0, \, 
    \{ b= r_1 (3 \dot{u}^2 + \chi u)^{-1} (1- \chi \dot{u}^2), \ a = r_1 \} \right), \\
    (\Gamma_2,\Eta_2) &= \left( \chi \dot{u}^{2}-1 \neq 0 \, \wedge \,  
    3 \dot{u}^2 + \chi u =0 \, \wedge \, \dot{u}^3 + \chi \dot{u} u -t=0, \, 
    \{ a=0, \ b = r_2       \} \right), \\
    (\Gamma_3,\Eta_3) &= \left( 3 \dot{u}^2 + \chi u = 0 \, \wedge \,
                       \dot{u}^3 + \chi \dot{u} u - t=0 \, 
    \wedge \, \chi \dot{u}^{2}-1=0 \, \wedge \, \chi>0 ), \, 
    \{ a= r_3, \ b = r_4 \} \right).
  \end{align*}
  As in the previous example, one can easily read off from the solutions
  $\Eta_{i}$ that the first case describes the regular points, the second case
  the regular singularities and the last case the irregular singularities.
  Note in the guard of the third case the clause $\chi>0$.  It represents
  the solvability condition for the clause $\chi \dot{u}^{2}-1=0$ and
  distinguishes between the elliptic and the hyperbolic gather.  In the
  elliptic gather the third case does not appear.

  The results of a complex analysis are independent of the value of the
  parameter $\chi$.  The algebraic Thomas decomposition yields seven cases:
  three with regular points, three with regular singularities and one with
  irregular singularities.  One of the cases with regular singularities
  never contains a real point independent of $\chi$; the existence of real
  irregular singularities depends of course on the sign of $\chi$.  The
  other unnecessary case distinctions stem again from an algebraic Thomas
  decomposition of the given equation.
\end{example}

So far, we have always studied each differential equation in the jet bundle
of the order of the equation.  However, in some cases it is also of
interest to study prolongations, i.\,e.\ to proceed to higher order.  This
is e.\,g.\ necessary to see whether solutions of finite regularity exist
(for a detailed analysis of a concrete class of quasilinear second-order
equations in this respect see \cite{wms:quasilin}).  Obviously, the
regularity of solutions is an issue only over the real numbers, as any
holomorphic function is automatically analytic.  A natural question is then
whether there exists a maximal prolongation order at which all
singularities can be detected.  The following example due to
Lange-Hegermann \cite[Ex.~2.93]{lh:phd} shows that this is not the case, as
in it at any prolongation order something new happens.  We make here
contact with some classical (un)decidability questions for power series
solutions of differential equations as e.\,g.\ studied in the classical
article by Denef and Lipshitz \cite{dl:pss}.

\begin{example}
  We start with the first-order equation $\mathcal{J}_{1}\subset J_{1}\pi$
  in three unknown functions $u$, $v$, $w$ of the independent variable $t$
  defined by the following polynomial system:
  \begin{equation}\label{eq:lh1}
    tv\dot{u}-tu+1=0\,,\quad \dot{v}-w=0\,,\quad \dot{w}=0\,.
  \end{equation}
  To obtain the first prolongation $\mathcal{J}_{2}\subset J_{2}\pi$, we
  must augment the system \eqref{eq:lh1} by the equations
  \begin{displaymath}
    tv\ddot{u}+(tw+v-t)\dot{u}-u=0\,,\quad \ddot{v}=\ddot{w}=0\,.
  \end{displaymath}
  If we prolong further to some order $q>2$, then for the definition of
  $\mathcal{J}_{q}\subset J_{q}\pi$ we must add for each integer
  $2< k\leq q$ the three equations
  \begin{displaymath}
    tvu^{(k)} + \bigl[(k-1)(tw+v)-t\bigr]u^{(k-1)} +
    (k-1)\bigl[(k-2)w-1\bigr]u^{(k-2)}=0\,,
    \quad v^{(k)}= w^{(k)}=0\,.
  \end{displaymath}

  The Vessiot spaces of $\mathcal{J}_{1}$ arise as solutions of the linear
  system
  \begin{displaymath}
    (tw+v-t)\dot{u}a + tvb_{u}=0\,,\quad b_{v}=b_{w}=0\,.
  \end{displaymath}
  For computing the Vessiot spaces of the prolonged equation, we exploit
  Proposition~\ref{prop:prolvess} telling us that at each prolongation
  order only the newly added equations must be considered.  Hence we always
  obtain a linear system containing three equations.  At any prolongation
  order $q>1$, the Vessiot spaces of $\mathcal{J}_{q}$ are defined by the
  linear system
  \begin{displaymath}
    \Bigl[\bigl(q(tw+v)-t\bigr)u^{(q)} + q\bigl((q-1)w-1\bigr)u^{(q-1)}\Bigr]a +
    tvb_{u}=0\,,\quad b_{v}=b_{w}=0\,.
  \end{displaymath}

  We fed the basic semialgebraic systems $\Sigma_{1}$, $\Sigma_{2}$ and
  $\Sigma_{3}$ corresponding to the first three equations
  $\mathcal{J}_{1}$, $\mathcal{J}_{2}$ and $\mathcal{J}_{3}$ into our
  implementation.  For each system, it returned three cases containing the
  regular, regular singular and irregular singular points, respectively, of
  the corresponding differential equation.  We obtained for $q=1$ the
  following results (we only discuss the guards $\Gamma_{i}$ and do not
  present the respective solutions $\Eta_{i}$).  As already mentioned in
  Remark~\ref{rem:sing}, the regular points represent the generic case and
  the corresponding guard is given by
  $\Gamma_{1}=(\Sigma_{1} \wedge v\neq0 \wedge t\neq0)$.  There is one
  family of regular singular points described by the guard
  \begin{displaymath}
    \Gamma_{2} = \bigl(\, \dot{w}=0 \wedge \dot{v}-w=0 \wedge tu-1=0
    \wedge v=0 \wedge t(w-1)\dot{u}-u\neq0\, \bigr)\,.
  \end{displaymath}
  Obviously $v=0$ is the condition characterising singularities.  The final
  inequation distinguishes the regular from the irregular ones: the guard
  $\Gamma_{3}$ for the latter one differs from $\Gamma_{2}$ only by this
  inequation becoming an equation.  For later use, we make the following
  observation.  The equation $tu-1=0$ implies that neither $t$ nor $u$ may
  vanish at a singularity.  Thus at an irregular singularity we cannot have
  $w=1$ or $\dot{u}=0$, as otherwise the final equation in $\Gamma_{3}$
  would be violated.
  
  We refrain from explicitly writing down all the guards of the next
  prolongations, as they become more and more lengthy with increasing
  order.  The regular points are always described by a guard of the form
  $\Gamma_{1}=(\Sigma_{q}\wedge v\neq0 \wedge t\neq0)$.  The key condition
  for singularities is always $v=0$.  Besides the equations from
  $\Sigma_{2}$, the guard $\Gamma_{2}$ for the regular singularities of
  $\mathcal{J}_{2}$ contains in addition the equation $t(w-1)\dot{u}-u=0$
  and the inequation $t(2w-1)\ddot{u}+2(w-1)\dot{u}\neq0$ whereas for the
  irregular singularities this inequation becomes again an equation.  Thus
  all the singularities of $\mathcal{J}_{2}$ lie over the \emph{irregular}
  singular points of $\mathcal{J}_{1}$.  This is not surprising, as it is
  easy to see that firstly for any differential equation $\mathcal{J}_{q}$
  all singularities of its prolongation $\mathcal{J}_{q+1}$ must lie over
  the singularities of $\mathcal{J}_{q}$ and secondly that the fibre over a
  regular singular point is always empty.  This time we can observe that at
  an irregular singularity we cannot have $w=1/2$ or $\ddot{u}=0$.  The
  results of $\mathcal{J}_{3}$ are in complete analogy: now $w=1/3$ or
  $u^{(3)}=0$ are not possible at an irregular singularity.

  The above made observations are of importance for the (non-)existence of
  formal power series solutions.  Assume that we want to construct such a
  solution for the initial conditions $u(t_{0})=u_{0}$, $v(t_{0})=v_{0}$
  and $w(t_{0})=w_{0}$.  Recall that a point in the jet bundle $J_{q}\pi$
  corresponds to a Taylor polynomial of degree $q$.  Thus a point $\rho$ on
  a differential equation $\mathcal{J}_{q}$ may be considered as such a
  Taylor polynomial approximating a solution.  This Taylor polynomial can
  be extended to one of degree $q+1$, if and only if the prolonged equation
  $\mathcal{J}_{q+1}$ contains at least one point lying over $\rho$.  As
  already mentioned, this is never the case, if $\rho$ is a regular
  singularity.  Hence, there can never exist a formal power series solution
  through a regular singular point.  Our observations have now the
  following significance.  Assume that we choose $v_{0}=0$ so that we are
  always at a singularity.  Then no formal power series solution exists, if
  we choose $w_{0}=1$, as the $w$-coordinate of an irregular singularity of
  $\mathcal{J}_{1}$ can never have the value $1$.  Similarly, no formal
  power series solutions exists for $w_{0}=1/2$, but now the problem occurs
  at the prolonged equation $\mathcal{J}_{2}$ where the $w$-coordinate of
  an irregular singularity can never have the value $1/2$.  Generally, one
  can show by a simple induction that for $w_{0}=1/k$ with $k\in\NN$ no
  formal power series solution exists, as the prolongation
  $\mathcal{J}_{k}$ of order $k$ does not contain a corresponding irregular
  singularity.
\end{example}

\begin{example}
  As a final example, we study a minor variation of \eqref{eq:lh1} which
  destroys most of the interesting properties of \eqref{eq:lh1}, but which
  nicely demonstrates why it is useful to take some care with how the
  guards are returned.  We consider the following basic semialgebraic
  system which differs from \eqref{eq:lh1} only by a missing factor $t$ in
  one term:
  \begin{equation}\label{eq:lh2}
    tv\dot{u}-u+1=0\,,\quad \dot{v}-w=0\,,\quad \dot{w}=0\,.
  \end{equation}
  While our implementation yields for the regular points exactly the same
  guard as before, the dropped factor leads to considerable more distinct
  cases of regular and irregular singularities.  The irregular
  singularities of $\mathcal{J}_{1}$ form the union of four two-dimensional
  (real) algebraic varieties, as one can easily recognise from the
  corresponding guard in disjunctive normal form:
  \begin{align*}
    \Gamma_{3} =
    &(\, \dot{w}=0 \wedge w-1=0 \wedge \dot{v}-1=0 \wedge v=0
         \wedge u-1=0\, ) \vee {}\\
    &(\, \dot{w}=0 \wedge \dot{v}-w=0 \wedge v=0 \wedge \dot{u}=0
         \wedge u-1=0\, ) \vee {}\\
    &(\, \dot{w}=0 \wedge \dot{v}-w=0 \wedge v=0 \wedge u-1=0 \wedge t=0\, )
         \vee {} \\
    &(\, \dot{w}=0 \wedge \dot{v}-w=0 \wedge \dot{u}=0 \wedge u-1=0 \wedge
         t=0\, )\,. 
  \end{align*}
  The regular singularities form the union of two three-dimensional
  varieties without the above described union of four two-dimensional
  varieties.  This set is characterised by the following guard in
  disjunctive normal form:
  \begin{align*}
    \Gamma_{2} =
    &(\, \dot{w}=0 \wedge \dot{v}-w=0 \wedge v=0 \wedge u-1=0 \wedge
         w-1\neq0 \wedge \dot{u}\neq0 \wedge t\neq0\, ) \vee {}\\
    &(\, \dot{w}=0 \wedge \dot{v}-w=0 \wedge u-1=0 \wedge t=0 \wedge
         v\neq0 \wedge \dot{u}\neq0\, )\,. 
  \end{align*}
  As in the last example, we also considered the first two prolongations of
  $\mathcal{J}_{1}$.  The dimensions of the semialgebraic sets containing
  the regular, regular singular and irregular singular points are in any
  prolongation order $4$, $3$ and $2$.  However, the guards $\Gamma_{2}$
  and $\Gamma_{3}$ are getting more and more complicated.  For
  $\mathcal{J}_{2}$ the guard $\Gamma_{2}$ contains four conjunctive
  clauses and $\Gamma_{3}$ six; for $\mathcal{J}_{3}$ these numbers raise
  to six and eight.  Without some simplifications and the consequent
  transformation into disjunctive normal form, the guards would be much
  harder to read.  The disjunctive normal form allows for a simple
  interpretation as union of basic semialgebraic sets (not necessarily
  disjoint).
\end{example}

\section{Conclusions}

For the basic existence and uniqueness theory of explicit ordinary
differential equations, it makes no difference whether one works over the
real or over the complex numbers.  The standard proofs of the
Picard--Lindel\"of Theorem are independent of the base field.  The
situation changes completely, if one performs a deeper analysis of the
equations and if one studies more general equations admitting
singularities.  Both the questions asked and the techniques used differ
considerably over the real and over the complex numbers.  We mentioned
already in Section~\ref{sec:computations} the question of the regularity of
solutions appearing only in a real analysis.  There is a long tradition in
studying the singularities of linear ordinary differential equations (see
\cite{ww:aeode} for a rather comprehensive account of the classical results
or \cite{hz:monogr} for an advanced modern presentation) and a satisfactory
theory requires methods from complex analysis like monodromy groups and
Stokes matrices.  By contrast, singularities of nonlinear ordinary
differential equations are mostly studied over the real numbers using
methods from dynamical systems theory and differential topology (see
\cite{via:geoode,aor:bising} for an introduction and
\cite{ld:singgen,aad:normform} for some typical classification results).

In this article, we were concerned with the algorithmic detection of all
geometric singularities of a given system of algebraic ordinary
differential equations.  Using the geometric theory of differential
equations, we could reduce this problem to a purely algebraic one.  In
\cite{lrss:gsade}, two of the authors presented together with collaborators
a solution over the complex numbers via the Thomas decomposition.  Now, we
complemented the results of \cite{lrss:gsade} by developing an alternative
approach to the algebraic part of \cite{lrss:gsade} (as the part where the
base field really matters) applicable over the real numbers using
parametric Gaussian elimination and quantifier elimination.

A key novelty of this alternative approach is to consider the decisive
linear system \eqref{eq:vess} determining the Vessiot spaces first
independently of the given differential system.  This allows us to make
maximal use of the linearity of \eqref{eq:vess} and to apply a wide range
of heuristic optimisations.  Compared with the more comprehensive approach
of \cite{lrss:gsade}, this also leads to an increased flexibility and we
believe that the new approach will be in general more efficient in the
sense that fewer cases will be returned.  Although we cannot prove this
rigorously, already the comparatively small examples studied in
Section~\ref{sec:computations} show this effect.  We expect it to be much
more pronounced for larger systems, as in the approach of \cite{lrss:gsade}
it cannot be avoided that the Thomas decomposition also analyses the
geometry of a differential equation $\mathcal{J}_{\ell}$ even where it is
irrelevant for the detection of singularities.

Our main tool for this first step is parametric Gaussian elimination.  We
proposed here a variant with two specific properties required by our
application to differential equations.  Firstly, it provides a disjoint
partitioning of the parameter space.  Secondly, it takes the relative
position of the solution space with respect to a prescribed cartesian
subspace taken into account.  The last property was realised by an adapted
pivoting strategy.  Our elimination algorithm \texttt{ParametricGauss}
makes strong use of a deduction procedure $\vdash_{K}$ for efficient
heuristic tests for the vanishing or non-vanishing of certain coefficients
under the current assumptions, thus avoiding redundant case distinctions at
an early stage at comparatively little computational costs.  The practical
performance of the algorithm depends decisively on the power of this
procedure.  In our proof-of-concept realisation, we used with the
\textsc{Redlog} simplifier a well-established powerful deduction procedure.

In the examples studied here, the results always turned out to be optimal
in the sense that the output contained exactly three different cases
corresponding to regular, regular singular and irregular singular points.
In general, this will not be the case.  In more complicated examples it may
for instance happen that at different regular points different pivots are
chosen by the parametric Gaussian elimination so that these points appear
in different cases.  Sometimes there may exist an intrinsic geometric
reason for this, but sometimes these case distinctions may be simply due to
the heuristics used to choose the pivots.

In the second step of our approach, the test whether the various cases
found by the algorithm \texttt{ParametricGauss} actually appear on the
analysed differential equation $\mathcal{J}_{\ell}$ requires a quantifier
elimination.  As in practice many algebraic differential equations are as
polynomials of fairly low degree, fast virtual substitution techniques will
often suffice.  As fallback a partial cylindrical algebraic decomposition
can be used.

We have ignored algebraic singularities, i.\,e.\ singular points in the
sense of algebraic geometry.  The Jacobian criterion allows us to identify
them easily using linear algebra.  In \cite{lrss:gsade}, the detection of
algebraic and geometric singularities is done in one go.  This approach
leads again to certain redundancies, as among the algebraic singularities
case distinctions are made because of the behaviour of the linear system
\eqref{eq:vess}, although the latter is not overly meaningful at such
points.  Our novel approach is more flexible and in it we believe that it
makes more sense to separate the detection of the algebraic singularities
from the detection of the geometric singularities.

One should note a crucial difference between the real and the complex case
concerning algebraic singularities. On a complex variety, a point is
nonsingular, if \emph{and only if} a local neighbourhood of it looks like a
complex manifold \cite[Thm.~7.4]{kk:eag}.  For this reason, nonsingular
points are often called smooth.  Over the reals, one has no longer an
equivalence: there may exist singular points on a real variety around which
the variety looks like a real manifold \cite[Rem.~7.8]{kk:eag}
\cite[Ex.~3.3.12]{bcr:realag}.  At such points, both the Zariski tangent
space and the smooth tangent space are defined with the former being of
higher dimension.  For defining the Vessiot space at such a point, it
appears preferable to use the smooth tangent space.  However, it is a
non-trivial task to identify such points. Diesse \cite{md:locrealag}
presented recently a criterion for detecting them, but its effectivity is
yet unclear.  We will discuss elsewhere in more detail how one can cope
with algebraic singularities over the reals.

\bibliography{RealSing.bib}
\bibliographystyle{plain}

\subsection*{Acknowledgments}

This work was supported by the bilateral project ANR-17-CE40-0036 and
DFG-391322026 SYMBIONT.  The first two authors thank Marc Diesse for useful
discussions about singularities in real algebraic geometry.  The third
author is grateful to Sarah Sturm at the University of Bonn and Marco Voigt
at Max Planck Institute for Informatics for inspiring and clarifying
discussions around Propositions \ref{prop:termination} and \ref{prop:cd}.

\end{document}